\documentclass{amsart}

\usepackage{amsmath,amssymb, amsfonts,amscd,amsxtra}
\usepackage{amsthm}
\usepackage[nobysame]{amsrefs}
\usepackage{indentfirst}
\usepackage{mathrsfs}
\usepackage{multirow}
\usepackage{enumerate}
\usepackage{graphicx}
\usepackage{hyperref}
\usepackage{xcolor}
\usepackage{fourier}
\usepackage{float}
\usepackage[norelsize,boxed,linesnumbered,vlined,algo2e]{algorithm2e} 




\theoremstyle{plain}
\newtheorem{theorem}{Theorem}

\newtheorem{prop}[theorem]{Proposition}

\theoremstyle{definition}

\theoremstyle{remark}
\newtheorem*{remark}{Remark}

\DeclareMathOperator{\tr}{tr}

\newcommand{\RR}{\mathbb{R}}

\newcommand{\TT}{\mathrm{T}}

\newcommand{\Or}{\mathcal{O}}

\newcommand{\wh}[1]{\widehat{#1}}

\def\E{\mathcal{E}}
\def\C{\mathcal{C}}
\def\B{\mathcal{B}}

\def\RR{\mathbb{R}}
\def\T{\mathfrak{T}}

\IfFileExists{mathabx.sty}%
  {\DeclareFontFamily{U}{mathx}{\hyphenchar\font45}%
   \DeclareFontShape{U}{mathx}{m}{n}{<->mathx10}{}%
   \DeclareSymbolFont{mathx}{U}{mathx}{m}{n}%
   \DeclareFontSubstitution{U}{mathx}{m}{n}%
   \DeclareMathAccent{\widebar}{0}{mathx}{"73}%
   
}{%
  \PackageWarning{mathabx}{%
    Package mathabx not available, therefore\MessageBreak substituting
    widebar with overline\MessageBreak }%
  \newcommand{\widebar}[1]{\overline{#1}}%
}

\newcommand{\abs}[1]{\left\lvert#1\right\rvert}
\newcommand{\norm}[1]{\left\lVert#1\right\rVert}
\newcommand{\average}[1]{\langle#1\rangle}

\newcommand{\vertiii}[1]{\VERT #1 \VERT}


\begin{document}

\title{Localized density matrix minimization and linear scaling
  algorithms} 

\thanks{
The research of J.L.~was supported in part by the Alfred P.~Sloan
  Foundation and the National Science Foundation under award
  DMS-1312659.  
  The authors would like to thank Stanley Osher and Vidvudz Ozolins for their encouragements and helpful discussions.
  }

\author{Rongjie Lai}
\address{Department of Mathematics, Rensselaer Polytechnic Institute. }
\email{lair@rpi.edu}

\author{Jianfeng Lu}
\address{Departments of Mathematics, Physics, and Chemistry, Duke University}
\email{jianfeng@math.duke.edu}

\date{}

\begin{abstract}
  We propose a convex variational approach to compute localized
  density matrices for both zero temperature and finite temperature
  cases, by adding an entry-wise $\ell_1$ regularization to the free
  energy of the quantum system. Based on the fact that the density
  matrix decays exponential away from the diagonal for insulating
  system or system at finite temperature, the proposed $\ell_1$
  regularized variational method provides a nice way to approximate
  the original quantum system. We provide theoretical analysis of the
  approximation behavior and also design convergence guaranteed
  numerical algorithms based on Bregman iteration. More importantly,
  the $\ell_1$ regularized system naturally leads to localized density
  matrices with banded structure, which enables us to develop
  approximating algorithms to find the localized density matrices with
  computation cost linearly dependent on the problem size.
\end{abstract}

\maketitle

\section{Introduction}


Efficient calculation of the low-lying spectrum of operators plays a
central role in many applications. In particular, in the context of
electronic structure theory, given a discretized effective Hamiltonian
(such as the current iterate in a self-consistent iteration), the goal
is to obtain the density matrix corresponds to the number of
electrons. For zero temperature, the density matrix is the projection
operator onto the low-lying eigenspace; for finite temperature, the
density matrix is given by the Fermi-Dirac function acting on the
Hamiltonian \cites{ParrYang:89, Martin:04}.

In this work, we extend the variational approach for localized density
matrix in our previous work \cite{LaiLuOsher:15} to finite
temperature, by adding entrywise $\ell_1$ penalty to the free energy
of the quantum system (which, in the context of density functional
theory, corresponds to the linear version of the Mermin functional \cite{Mermin:65}).
We also theoretically show that the proposed localized density matrix approximates, via the Frobenius norm, the true
density matrix linearly depends on the regularization parameter $1/\eta$. In addition, convergence guaranteed numerical algorithms are also
designed to solve the proposed problems based on Bregman iteration. 

More importantly, this paper focuses on efficient algorithms to
minimize the variational problem for localized density matrix both at
zero and finite temperature. In particular, we develop linear scaling
algorithms such that the computational cost scales linearly as the
dimension of the matrix. The key idea is to exploit the decay of the
density matrix away from the diagonal, the $\ell_1$ regularized localized density matrices enable us to approximate the
original variational problem by restricting to banded matrices.

Linear scaling algorithms have been a focused research direction in
electronic structure calculation since 1990s.  Closely related to our
context is the density matrix minimization (DMM) algorithms, first
introduced by Li, Nunes, and Vanderbilt \cite{LiNunesVanderbilt:93}
and Daw \cite{Daw:93}, and have been further developed since then, see
e.g., the reviews \cites{Goedecker:99, BowlerMiyazaki:12}.  These
algorithms are based on the fact that for insulating system or systems
at finite temperature, the density matrix decays exponentially away
from the diagonal (see e.g., \cites{Kohn:96, ELu:ARMA,
  Benzi:13}). Thus, one may require the density matrices to satisfy
prescribed sparsity structures, such as banded matrices for 1D
problem. As a result, the degree of freedom and the computational cost
becomes linear scaling.  Another closely related class of methods is
the purification method of density matrix (see e.g.,
\cites{McWeeny:60, PalserManolopoulos:98, Niklasson:02, Mazziotti:03}
and a review \cite{Niklasson:11}). Unlike the approach of DMM and that
we take in this work, these methods are not variational.

We emphasize a crucial difference between our approach and the
previous works: \emph{Our variational problem is still convex even
  after truncation!}  This is in stark contrast to the previous
formulations where the convexity is lost by using purification
\cite{McWeeny:60} or other techniques to approximate the inverse of
density matrix \cite{Challacombe:99}. This loss of convexity often
introduces local minimizers to the variational problem and also issues
of convergence.  We note that even when the $\ell_1$ regularization is
dropped from our variational problem, it is still convex and different
from the standard DMM algorithms. In fact, it would be of interest to
explore this convex formulation, which will be considered in our
future work.

The rest of the paper is organized as follows. In the next section, we
introduce the variational principles for localized density matrix for
both zero and finite temperature cases, with their approximation
properties. In Section~\ref{sec:bregmanforLDM}, we introduce the
Bregman iteration type algorithms to solve these minimization
problems. Linear scaling algorithms are discussed in
Section~\ref{sec:linear}.  We validate the algorithms through
numerical examples in Section~\ref{sec:experiments}. Some conclusive
remarks are discussed in Section~\ref{sec:conclusion}.

\section{Localized density matrix minimization}


In this work, we will consider a family of energy functionals with
parameters $\beta$ and $\eta$:
\begin{equation}
  \E_{\beta, \eta} = \tr (H P) + \frac{1}{\beta} \tr\Big\{P\ln P + (1- P)\ln(1 - P)\Big\} + \frac{1}{\eta}\vertiii{P}_1
\end{equation}
where $\vertiii{\cdot}$ denotes the entrywise $\ell_1$ norm of a
matrix and $H$ and $P$ are $n\times n$ symmetric matrices, which are
respectively the (discrete) Hamiltonian and density matrix in the
context of electronic structure calculation. Here $\beta$ is the
inverse temperature and $\eta$ is the parameter for the $\ell_1$
regularization.

Recall that the starting point of \cite{LaiLuOsher:15} is the convex
variational principle for density matrix 
\begin{equation}\label{eqn:DM_0T}
  \begin{aligned}
    & \min_{P \in \RR^{n\times n}} \E_{\infty, \infty}(P) = \min_{P \in \RR^{n\times n}} \tr(H P), \\
    & \text{s.t.} \quad \tr P = N,\; P = P^{\TT},\; 0 \preceq P \preceq I,
  \end{aligned}
\end{equation}
where the notation $A \preceq B$ denotes that $B - A$ is a symmetric
positive semi-definite matrix.  Note that the constraint $0 \preceq P
\preceq I$ is the convexification of the idempotency constraint $P =
P^2$, which gives the same minimizer for non-degenerate problems (see
e.g., \cite{LaiLuOsher:15}*{Proposition 1}). Indeed, denote
$\{\lambda_i, \phi_i\}_{i=1}^n$ the eigenvalue and eigenvector pairs
of $H$ with the assumption $\lambda_N < \lambda_{N+1}$, the solution
of \eqref{eqn:DM_0T} is given by
\begin{equation}
  P_{\infty, \infty} = \sum_{i=1}^N \phi_i \phi_i^{\TT},
\end{equation}
the projection operator on the subspace spanned by the first $N$
eigenvectors.

The variational principle \eqref{eqn:DM_0T} corresponds to the
physical zero temperature case (and hence the inverse temperature
$\beta = \infty$), for finite temperature, we minimize the free energy
\begin{equation}\label{eqn:DM_FT}
  \begin{aligned}
    & \min_{P \in \RR^{n \times n}} \E_{\beta, \infty}(P) = \min_{P \in \RR^{n \times n}} \tr ( H P) + \frac{1}{\beta} \tr\Bigl\{P\ln P + (1- P)\ln(1 - P)\Bigr\} \\
    & \text{s.t.}  \quad  \tr P = N,\; P = P^{\TT}, \; 0 \preceq P \preceq I,
  \end{aligned}
\end{equation}
where we have used the Fermi-Dirac entropy
\begin{equation}
  \varphi(x) = x \ln x + (1 - x) \ln (1 - x), \qquad x \in [0, 1].
\end{equation}
Note that 
\begin{align}
  & \varphi'(x) = \ln x - \ln (1 - x), \\
  & \varphi''(x) = x^{-1} + (1 - x)^{-1}, 
\end{align}
and hence $\varphi''(x) \geq 4$ for $x \in [0, 1]$. Therefore, $\tr
\varphi(P)$ is strictly convex with respect to $P$ and the minimizer of \eqref{eqn:DM_FT} exists and is uniquely given by 
\begin{equation}\label{eq:Pbetainf}
  P_{\beta, \infty} = \Bigl[ 1 + \exp( \beta (H - \mu)) \Bigr]^{-1}  
  = \sum_{i=1}^n \rho_i \phi_i \phi_i^{\TT},
\end{equation}
where $\rho_i$ is the occupation number of the $i$-th eigenstate, given by 
\begin{equation*} 
  \rho_i = \frac{1}{1 + \exp(\beta ( \lambda_i
    - \mu))} \in [0, 1], \qquad i=1, \ldots, n,
\end{equation*} 
and $\mu$ is the Lagrange multiplier for the constraint $\tr P = N$,
known as the chemical potential. It is determined by
\begin{equation*}
  \sum_{i=1}^n \frac{1}{1 + \exp(\beta ( \lambda_i - \mu))} = N.
\end{equation*}
For a fixed Hamiltonian matrix, $\mu$ is then a function of
$\beta$. It is not difficult to see that 
\begin{equation}
  \lim_{\beta \to \infty} \mu(\beta) = \frac{1}{2} (\lambda_N + \lambda_{N+1}), 
\end{equation}
which lies in between the highest occupied and lowest unoccupied
eigenvalues.


In \cite{LaiLuOsher:15}, the following $\ell_1$ regularized version of
the variaional principle \eqref{eqn:DM_0T} is proposed as a convexified
model for compressed modes introduced in
\cite{OzolinsLaiCaflischOsher:13}.
\begin{equation}\label{eqn:LDM_0T}
  \begin{aligned}
    P_{\infty,\eta} = & \arg\min_{P \in \RR^{n \times n}} \E_{\infty, \eta} =  \min_{P \in \RR^{n\times n}} \tr(H P) + \frac{1}{\eta} \vertiii{P}_1, \\
    & \text{s.t.} \quad \tr P = N,\; P = P^{\TT}, \; 0 \preceq P
    \preceq I,
  \end{aligned}
\end{equation}
where $\vertiii{\cdot}_1$ is the entrywise $\ell_1$ matrix norm and
$\eta$ is a penalty parameter for entrywise sparsity. The variational
principle provides $P_{\infty,\eta}$ as a sparse representation of the
projection operator onto the low-lying eigenspace. Numerical
experiments in \cite{LaiLuOsher:15} demonstrate the localization of
$P_{\infty,\eta}$ for the above $\ell_1$ regularized model.

For the finite temperature, applying the same $\ell_1$ regularization to enhance sparsity, we arrive at 
\begin{equation}\label{eqn:LDM_FT}
  \begin{aligned}
    & \min_{P \in \RR^{n \times n}} \E_{\beta, \eta}(P) 
    = \min_{P \in \RR^{n\times n}} \tr(H P) + \frac{1}{\beta} \tr\Big\{P\ln P + (1- P)\ln(1 - P)\Big\} + \frac{1}{\eta}\vertiii{P}_1\\
    & \text{s.t.}  \quad \tr P = N, \; P = P^{\TT}, \;  0 \preceq P \preceq I.
  \end{aligned}
\end{equation}
Since this variational principle is strictly convex, the minimizer
exists and is unique.

We will call the minimizing density matrix obtained by the above
variational principles the {\em localized density matrix} (LDM).  We
provide in the remaining of this section some approximation results of
LDM as the parameter $\eta \to \infty$.

\begin{theorem}\label{thm:LDM_0T_consist} 
  Assume $H$ is non-degenerate that $\lambda_N < \lambda_{N+1}$ and
  denote $P_{\infty, \infty}$ the minimizer of \eqref{eqn:DM_0T}. Let
  $P_{\infty, \eta}$ be a minimizer of \eqref{eqn:LDM_0T}, we have
  \begin{enumerate}\renewcommand{\labelenumi}{(\alph{enumi})}
  \item $\displaystyle 0\leq \E_{\infty, \infty}(P_{\infty, \eta}) -
    \E_{\infty, \infty}(P_{\infty, \infty}) \leq \frac{1}{\eta}
    \vertiii{P_{\infty, \infty}}_1$.
  \item $\displaystyle \| P_{\infty, \eta} - P_{\infty, \infty} \|_F^2 \leq
    \frac{2}{\eta}\frac{\vertiii{P_{\infty, \infty}}_1}{ (\lambda_{N+1} -
      \lambda_{N})}$.
  \end{enumerate}
  In particularly, $\displaystyle \lim_{\eta\rightarrow\infty}
  \E_{\infty, \infty}(P_{\infty, \eta}) = \E_{\infty,
    \infty}(P_{\infty, \infty})$, and $\displaystyle
  \lim_{\eta\rightarrow\infty} \norm{ P_{\infty, \eta} - P_{\infty, \infty} }_F = 0$.
\end{theorem}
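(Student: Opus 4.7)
The plan is to handle part (a) directly from the two variational characterizations and then deduce part (b) from (a) via a spectral-gap argument that converts the resulting energy gap into a Frobenius-norm estimate.

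For part (a), the lower bound $\E_{\infty,\infty}(P_{\infty,\eta})\geq \E_{\infty,\infty}(P_{\infty,\infty})$ is immediate, since $P_{\infty,\eta}$ is feasible for \eqref{eqn:DM_0T} (the two problems share the same constraint set) while $P_{\infty,\infty}$ minimizes $\E_{\infty,\infty}$ over that set. For the upper bound, I use that $P_{\infty,\eta}$ minimizes $\E_{\infty,\eta}=\E_{\infty,\infty}+\frac{1}{\eta}\vertiii{\cdot}_1$ over the same feasible set; testing against $P_{\infty,\infty}$ gives $\E_{\infty,\infty}(P_{\infty,\eta})+\frac{1}{\eta}\vertiii{P_{\infty,\eta}}_1 \leq \E_{\infty,\infty}(P_{\infty,\infty})+\frac{1}{\eta}\vertiii{P_{\infty,\infty}}_1$, and dropping the nonnegative term on the left yields the claim.

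For part (b), the key is to establish the general inequality
$$\tr(HP)-\tr(HP_{\infty,\infty}) \geq \frac{\lambda_{N+1}-\lambda_N}{2}\,\|P-P_{\infty,\infty}\|_F^2$$
for every $P$ satisfying the constraints of \eqref{eqn:DM_0T}; once this is in hand, applying it at $P=P_{\infty,\eta}$ and combining with (a) immediately gives (b). I will work in the eigenbasis $\{\phi_i\}$ of $H$, where $P_{\infty,\infty}$ is diagonal with ones in positions $1,\dots,N$. Writing $P_{ij}=\langle \phi_i,P\phi_j\rangle$ and setting $\delta_i=|\mathbf{1}_{i\leq N}-P_{ii}|\in[0,1]$, the constraint $\tr P=N$ forces $\sum_{i\leq N}\delta_i=\sum_{i>N}\delta_i$, so the energy gap rearranges as $\sum_{i>N}\lambda_i\delta_i-\sum_{i\leq N}\lambda_i\delta_i \geq \frac{1}{2}(\lambda_{N+1}-\lambda_N)\sum_i\delta_i$. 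The other side comes from $0\preceq P\preceq I$, which implies $P^2\preceq P$ and hence $(P^2)_{ii}=\sum_j P_{ij}^2\leq P_{ii}$, giving $\sum_{j\neq i}P_{ij}^2\leq P_{ii}(1-P_{ii})=\delta_i(1-\delta_i)$; summing diagonal and off-diagonal contributions in Frobenius norm then yields $\|P-P_{\infty,\infty}\|_F^2\leq \sum_i\delta_i^2+\sum_i\delta_i(1-\delta_i)=\sum_i\delta_i$, and the spectral-gap inequality follows.

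The main obstacle is controlling the off-diagonal entries $P_{ij}$ in the eigenbasis: the trace $\tr(HP)$ only interacts with diagonal entries, so without the extra inequality $\sum_{j\neq i}P_{ij}^2\leq \delta_i(1-\delta_i)$ extracted from $P^2\preceq P$, the Frobenius-norm bound would fail. The two limit statements then follow immediately by sending $\eta\to\infty$ in (a) and (b), since $\vertiii{P_{\infty,\infty}}_1$ is a fixed finite constant independent of $\eta$.
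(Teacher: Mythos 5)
Your proof is correct and follows essentially the same route as the paper: part (a) is identical, and for part (b) both arguments reduce to bounding the occupation deficiency $\sum_{j\leq N}\bigl(1-\langle \phi_j, P\phi_j\rangle\bigr)$ from above by the energy gap divided by $\lambda_{N+1}-\lambda_N$, and then converting that quantity into a Frobenius-norm bound using $P^2\preceq P$ (you apply this entry-wise on the diagonal of $P^2$, while the paper applies it at the level of traces via $\tr(P^2)\leq \tr(P)$). Packaging the estimate as a standalone quadratic-growth inequality valid for every feasible $P$ is a slightly cleaner presentation, but the mathematical content is the same.
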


\begin{remark}
  Recall that the minimizer of \eqref{eqn:LDM_0T} might not be unique
  \cite{LaiLuOsher:15}, nevertheless the theorem applies to any
  minimizers.
\end{remark}

\begin{proof} 
  It is clear that $\E_{\infty, \infty}(P_{\infty,
    \infty}) \leq \E_{\infty, \infty}(P_{\infty, \eta})$ as
  $P_{\infty, \infty}$ minimizes the energy $\E_{\infty, \infty}$. On
  the other hand,since $P_\eta$ is an optimizer of \eqref{eqn:LDM_0T},
  we have,
  \begin{equation*}
    \E_{\infty, \infty}(P_{\infty, \eta}) \leq   \E_{\infty, \infty}(P_{\infty, \eta}) + \frac{1}{\eta}\vertiii{P_{\infty, \eta}}_1 \leq \E_{\infty, \infty}(P_{\infty, \infty}) +  \frac{1}{\eta} \vertiii{P_{\infty, \infty}}_1
  \end{equation*}
  which yields the statement (a). 

  To show (b), we denote $\{\lambda_i, \phi_i\}_{i=1}^n$ the
  eigenpairs of $H$ and recall that 
  \begin{equation*}
    P_{\infty, \infty} = \sum_{1 \leq i \leq N} \phi_i \phi_i^{\TT}.
  \end{equation*} 
  Moreover, as $P_{\infty, \eta}$ is symmetric and hence
  diagonalizable, we denote $\{\sigma_i, v_i\}_{i=1}^n$ its
  eigenpairs, with $P_{\infty, \eta} v_i = \sigma_i v_i$. Note that
  $\sum_i \sigma_i = N$ and $\sigma_i \in [0, 1]$ since $P_{\infty,
    \eta}$ satisfies the constraint of \eqref{eqn:LDM_0T}.
  
  Using the property of trace,
  we calculate
  \begin{equation*}
    \tr(H P_{\infty, \eta}) = \sum_{i=1}^n \average{\phi_i, H
      P_{\infty, \eta} \phi_i} = \sum_{i=1}^n \lambda_i \average{\phi_i, P_{\infty, \eta} \phi_i} =: \sum_{i=1}^n \lambda_i s_i, 
  \end{equation*}
  where the last equality defines the shorthand notation $s_i =
  \average{\phi_i, P_{\infty, \eta} \phi_i}$. Since $0 \preceq
  P_{\infty, \eta} \preceq I$ and $\tr P_{\infty, \eta} = N$, we have
  \begin{equation*} 
    0\leq s_i \leq 1, \qquad\text{and}\qquad \sum_{i=1}^n s_i = \sum_{i=1}^n
    \average{\phi_i, P_{\infty, \eta}\phi_i} = \tr P_{\infty, \eta} = N. 
  \end{equation*}
  We now estimate, based on these properties of $\{s_i\}$,  
  \begin{multline*}
    \frac{1}{\eta} \vertiii{P_{\infty, \infty}}_1 \geq \E_{\infty,
      \infty}(P_{\infty, \eta}) - \E_{\infty, \infty}(P_{\infty,
      \infty}) \\
    = \sum_{j=1}^n\lambda_j s_j - \sum_{j=1}^N \lambda_j \geq
    \sum_{j=1}^N \lambda_j (s_j - 1) + \sum_{j=N+1}^n \lambda_j s_j \\
    \geq \lambda_N \sum_{j=1}^N (s_j - 1) + \lambda_{N+1}
    \sum_{j=N+1}^n s_j = (\lambda_{N+1} - \lambda_N) \sum_{j=1}^N (1 -
    s_j ).
  \end{multline*}
  This yields
  \begin{equation}
    \label{eqn:LDM_b}
    N - \sum_{j=1}^N s_j = \sum_{j =1 }^N (1 - s_j ) \leq  \frac{1}{\eta}\frac{\vertiii{P_{\infty, \infty}}_1}{ (\lambda_{N+1} - \lambda_{N})}.
  \end{equation}
  We complete the proof of (b) by
  \begin{align*}
    \norm{P_{\infty, \eta} - P_{\infty, \infty}}^2_F & = \tr\Bigl(
    (P_{\infty, \eta} - P_{\infty, \infty})^2\Bigr) \\
    & = \tr(P_{\infty, \eta}^2) - 2 \tr(P_{\infty,\infty} P_{\infty, \eta}) + \tr(P_{\infty,\infty}^2)  \\
    & \leq \tr(P_{\infty, \eta}) - 2  \tr(P_{\infty,\infty} P_{\infty, \eta}) + \tr(P_{\infty,\infty})\\
& = 2 N - 2 \sum_{i=1}^n \average{\phi_i, P_{\infty, \infty} P_{\infty, \eta} \phi_i} \\ 
& = 2\Bigl(N - \sum_{j=1}^N s_j \Bigr) \leq  \frac{2}{\eta}\frac{\vertiii{P_{\infty, \infty}}_1}{ (\lambda_{N+1} - \lambda_{N})}.
\end{align*}
\end{proof}

For the finite temperature case, we have the following analogous
result.
\begin{theorem}\label{thm:finitetemp}
  Denote $P_{\beta, \infty}$ and $P_{\beta, \eta}$ the minimizer of
  \eqref{eqn:DM_FT} and \eqref{eqn:LDM_FT} respectively, it holds 
  \begin{equation}\label{eq:finitetempenergy}
    0 \leq \E_{\beta, \infty}(P_{\beta, \eta}) - \E_{\beta, \infty}(P_{\beta, \infty})
    \leq \frac{1}{\eta} \vertiii{P_{\beta, \infty}}_1, 
  \end{equation}
  and also the estimate 
  \begin{equation}\label{eq:finitetempfrob}
    \tr \Bigl(\max\bigl( \beta^{-1}, \abs{H - \mu} \bigr) ( P_{\beta, \eta} - P_{\beta, \infty})^2\Bigr) \leq \frac{1}{\eta} \vertiii{P_{\beta, \infty}}_1. 
  \end{equation}
\end{theorem}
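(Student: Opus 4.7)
The plan is to mirror Theorem~\ref{thm:LDM_0T_consist} for \eqref{eq:finitetempenergy} and to establish a strong-convexity-type lower bound on the free-energy Bregman divergence for \eqref{eq:finitetempfrob}.

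For \eqref{eq:finitetempenergy}, the chain
\[
\E_{\beta,\infty}(P_{\beta,\infty}) \;\leq\; \E_{\beta,\infty}(P_{\beta,\eta}) \;\leq\; \E_{\beta,\infty}(P_{\beta,\eta}) + \tfrac{1}{\eta}\vertiii{P_{\beta,\eta}}_1 \;\leq\; \E_{\beta,\infty}(P_{\beta,\infty}) + \tfrac{1}{\eta}\vertiii{P_{\beta,\infty}}_1
\]
combines the minimality of $P_{\beta,\infty}$ for $\E_{\beta,\infty}$ (first inequality) with the minimality of $P_{\beta,\eta}$ for the regularized functional $\E_{\beta,\eta}$ (third inequality), exactly as in the zero-temperature argument.

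For \eqref{eq:finitetempfrob}, my target is the strong-convexity estimate
\[
\E_{\beta,\infty}(A) - \E_{\beta,\infty}(P_{\beta,\infty}) \;\geq\; \tr\bigl(\max(\beta^{-1},|H-\mu|)(A - P_{\beta,\infty})^2\bigr)
\]
for every admissible $A$, which combined with \eqref{eq:finitetempenergy} specialized to $A = P_{\beta,\eta}$ yields \eqref{eq:finitetempfrob}. The starting point is that $P_{\beta,\infty} = f(H)$ with $f(\lambda) = (1 + e^{\beta(\lambda-\mu)})^{-1}$ commutes with $H$, so the Euler--Lagrange identity $\varphi'(P_{\beta,\infty}) = -\beta(H - \mu)$ holds operator-wise, and together with the trace constraint $\tr(A - P_{\beta,\infty}) = 0$ recasts the left-hand side as the Fermi--Dirac Bregman divergence
\[
\tfrac{1}{\beta}\bigl[\tr\varphi(A) - \tr\varphi(P_{\beta,\infty}) - \tr(\varphi'(P_{\beta,\infty})(A - P_{\beta,\infty}))\bigr].
\]
Working in the shared eigenbasis of $H$ and $P_{\beta,\infty}$ (with $\rho_i = f(\lambda_i)$ and $B_{ij} = (A - P_{\beta,\infty})_{ij}$), the Hessian of $\tr\varphi$ at $P_{\beta,\infty}$ admits the divided-difference form $\sum_{i,j}\varphi^{[2]}(\rho_i,\rho_j)\,B_{ij}^2$ with $\varphi^{[2]}(\rho_i,\rho_j) = \beta(\lambda_j - \lambda_i)/(\rho_i - \rho_j) = 2(x_j - x_i)/(\tanh(x_j/2) - \tanh(x_i/2))$, where $x_k = \beta(\lambda_k - \mu)$. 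The scalar inequality
\[
\frac{2(x_j - x_i)}{\tanh(x_j/2) - \tanh(x_i/2)} \;\geq\; \max(1,|x_i|) + \max(1,|x_j|),
\]
a one-variable exercise handled by case analysis on signs and magnitudes of $x_i, x_j$ (using $\tanh \leq 1$ in the large-separation regime and Taylor expansion near the origin), then matches the Hessian at $P_{\beta,\infty}$ to $2\beta\tr(M(\cdot)^2)$, delivering the desired inequality to second order.

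The main obstacle is propagating this second-order (Hessian) bound at $P_{\beta,\infty}$ to a global Bregman-divergence bound at arbitrary admissible $A$: at intermediate points $P_s = (1-s)P_{\beta,\infty} + sA$ along the segment, the Hessian admits an analogous spectral representation, but in the eigenbasis of $P_s$, which rotates with $s$ and need not align with the $H$-eigenbasis. I see two viable routes: (i) a line-integral argument exploiting $\tr\varphi(A) - \tr\varphi(P_{\beta,\infty}) - \tr(\varphi'(P_{\beta,\infty})(A - P_{\beta,\infty})) = \int_0^1 (1-s)\sum_{i,j}\varphi^{[2]}(p_i(s), p_j(s))\,|B^{(s)}_{ij}|^2\,ds$, where one must control the weighted norm of $B$ in the rotating eigenbasis against its $H$-basis counterpart; (ii) a direct quantum-Pinsker-type inequality for the Fermi--Dirac relative entropy at the thermal state $f(H)$, established via double-operator-integral representations of the Bregman divergence of $\tr\varphi$ combined with operator monotonicity of $f$. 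The technical execution of one of these routes is the central challenge of the proof.
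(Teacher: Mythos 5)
Your proof of \eqref{eq:finitetempenergy} and your reduction of the left-hand side of \eqref{eq:finitetempfrob} to the Bregman divergence $\beta^{-1}\bigl[\tr \varphi(P_{\beta,\eta}) - \tr\varphi(P_{\beta,\infty}) - \tr\bigl(\varphi'(P_{\beta,\infty})(P_{\beta,\eta}-P_{\beta,\infty})\bigr)\bigr]$, using $\varphi'(P_{\beta,\infty}) = -\beta(H-\mu)$ and the trace constraint, coincide exactly with the paper's argument. The gap is the remaining step, which you yourself flag as ``the central challenge'': you never actually bound this Bregman divergence from below by $\beta\tr\bigl(\max(\beta^{-1},\abs{H-\mu})(\cdot)^2\bigr)$. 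Moreover, the route you develop in most detail --- verifying the bound for the Hessian quadratic form at $P_{\beta,\infty}$ via divided differences and then trying to propagate it to the full divergence --- is a dead end even in the scalar case. The Fermi--Dirac entropy has $\varphi''(y) = 1/(y(1-y))$, which blows up like $1/y$ as $y\to 0$, whereas the relative entropy $D(x,y) = x\ln(x/y)+(1-x)\ln((1-x)/(1-y))$ grows only like $\abs{\ln y}$ for fixed $x$; hence $D(x,y)\geq \tfrac12\varphi''(y)(x-y)^2$ is simply false, and no control of the rotating eigenbasis along the segment can rescue a propagation of the second-order bound. (Your route (i) does salvage the $\beta^{-1}$ branch of the max, since $\varphi''\geq 4$ uniformly, but not the $\abs{H-\mu}$ branch, which is the substantive part of the estimate.)

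The paper's resolution avoids Taylor expansion altogether. One proves the two-variable scalar inequality
\begin{equation*}
  x\ln\frac{x}{y} + (1-x)\ln\frac{1-x}{1-y} \;\geq\; \frac{\ln\frac{1-y}{y}}{1-2y}\,(x-y)^2, \qquad x,y\in[0,1],
\end{equation*}
by minimizing over $x$ for fixed $y$ (following Hainzl--Lewin--Seiringer), notes that the coefficient satisfies $\frac{\ln\frac{1-y}{y}}{1-2y} \geq \max\bigl(1, \bigl\lvert \ln\frac{1-y}{y}\bigr\rvert\bigr)$, and then lifts the scalar inequality to the trace inequality for operators via Klein's lemma --- this works because the lower bound is a polynomial in $x$ with $y$-dependent coefficients, so the operator version requires no common eigenbasis. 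Since $\ln\bigl((1-y)/y\bigr)$ evaluated at $y = P_{\beta,\infty}$ equals $\beta(H-\mu)$, the coefficient becomes $\beta\max(\beta^{-1},\abs{H-\mu})$ and \eqref{eq:finitetempfrob} follows. This is essentially your route (ii), but the double-operator-integral machinery and operator monotonicity are unnecessary; to complete your proof you should replace the Hessian analysis with this direct quadratic lower bound on the relative entropy and an appeal to Klein's lemma.
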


\begin{remark}
  Note that as immediate consequence of \eqref{eq:finitetempfrob}, we have
  the estimate in Frobenius norm
  \begin{equation}
    \norm{P_{\beta, \eta} - P_{\beta, \infty}}_F^2 \leq \frac{1}{\eta} \vertiii{P_{\beta, \infty}}_1 \min\bigl( \beta, \max_i \, \abs{\lambda_i - \mu}^{-1}\bigr). 
  \end{equation}
  Taking the limit $\beta \to \infty$, as the chemical potential $\mu
  \to \frac{1}{2} (\lambda_N + \lambda_{N+1})$, we get 
  \begin{equation*}
    \lim_{\beta \to \infty} \min_i \abs{\lambda_i - \mu} \to \frac{\lambda_{N+1} - \lambda_{N}}{2}.
  \end{equation*}
  Therefore, we recover the estimate for zero temperature case (assuming $\lambda_N < \lambda_{N+1}$).
\end{remark}

\begin{proof}
  By optimality of $P_{\beta, \eta}$ and $P_{\beta, \infty}$, we have
  $\E_{\beta, \infty}(P_{\beta, \infty}) \leq \E_{\beta,
    \infty}(P_{\beta, \eta})$ and
  \begin{multline}\label{eq:thm2eq1}
    \frac{1}{\eta} \vertiii{P_{\beta, \infty}}_1 \geq \E_{\beta, \infty}(P_{\beta, \eta}) - \E_{\beta, \infty}(P_{\beta, \infty}) \\
    = \tr \bigl( (P_{\beta, \eta} - P_{\beta, \infty}) H\bigr) + \beta^{-1} \tr \bigl( \varphi(P_{\beta, \eta}) - \varphi(P_{\beta, \infty})\bigr).
  \end{multline}
  Hence, we obtain the first conclusion \eqref{eq:finitetempenergy} of
  the theorem.

  Recall that $\varphi'(x) = \ln x - \ln (1 - x)$ and hence by
  explicit calculation using \eqref{eq:Pbetainf}
  \begin{equation*}
    \varphi'(P_{\beta, \infty}) = \ln \Bigl( P_{\beta, \infty} ( I - P_{\beta, \infty})^{-1}\Bigr) = - \beta( H - \mu).
  \end{equation*}
  Therefore 
  \begin{equation*}
    \tr \bigl( (P_{\beta, \eta} - P_{\beta, \infty}) H \bigr) = \tr \bigl( (P_{\beta, \eta} - P_{\beta, \infty}) (H - \mu) \bigr) = - \beta^{-1} \tr \bigl( \varphi'(P_{\beta, \infty}) (P_{\beta, \eta} - P_{\beta, \infty})  \bigr),
  \end{equation*}
  where we have used that $\tr P_{\beta, \eta} = \tr P_{\beta, \infty}
  = N$ in the first equality. Substitute into \eqref{eq:thm2eq1}, we
  get 
  \begin{equation}\label{eq:thm2eq2}
       \frac{1}{\eta} \vertiii{P_{\beta, \infty}}_1 \geq \beta^{-1}
      \Bigl[ \tr \bigl( \varphi(P_{\beta, \eta}) - \varphi(P_{\beta,
        \infty})\bigr) - \tr \bigl( \varphi'(P_{\beta, \infty})
      (P_{\beta, \eta} - P_{\beta, \infty}) \bigr) \Bigr] 
   \end{equation}
   Note that the right hand side is the Bregman divergence of
   $\varphi$. For $x, y \in [0, 1]$, we have
   \begin{equation*}
     \varphi(x) - \varphi(y) - \varphi'(y)(x - y) 
     = x (\ln x - \ln y) + (1 - x) ( \ln (1 - x) - \ln (1 - y)),
   \end{equation*}
   which is the Fermi-Dirac relative entropy. Following a similar
   calculation in \cite{HainzlLewinSeiringer:08}*{Theorem 1} (see also
   an improved version in \cite{FrankHainzlSeiringerSolovej:12}), we
   minimize $x \in [0, 1]$ for fixed $y$ and find
   \begin{equation*}
     x \ln \frac{x}{y} + (1 - x) \ln\frac{1 - x}{1 - y} \geq 
     \frac{\ln\frac{1 - y}{y}}{1 - 2y} (x - y)^2. 
   \end{equation*}
   Explicit calculation verifies that for $y \in [0, 1]$, we have 
   \begin{equation*}
     \frac{\ln \frac{1-y}{y}}{1 - 2y} \geq \max \Bigl( \bigl\lvert
     \ln \frac{1 - y}{y} \bigr\rvert, 1\Bigr). 
   \end{equation*}
   Hence, combining with \eqref{eq:thm2eq2} and using Klein's lemma
   \cite{Ruelle:99}*{Theorem 2.5.2}, we arrived at the estimate \eqref{eq:finitetempfrob}
   \begin{equation*}
     \frac{1}{\eta} \vertiii{P_{\beta, \infty}}_1 \geq  
     \tr \Bigl( \max\bigl( \beta^{-1}, \abs{H - \mu} \bigr) ( P_{\beta, \eta} - P_{\beta, \infty})^2 \Bigr).
   \end{equation*}
\end{proof}

\section{Numerical algorithms for LDMs}
\label{sec:bregmanforLDM}

\subsection{Bregman iteration for LDMs}
In \cite{LaiLuOsher:15}, a numerical algorithm has been proposed to
solve \eqref{eqn:LDM_0T} based on Bregman iteration. Bregman iteration
was first introduced into information science for solving total
variation related problems as an analog of ``adding back the noise''
in image denoising~\cite{Osher:2005MMS}. Split Bregman iteration has
been later proposed in \cite{Goldstein:2009split} based on the idea of
variable splitting. These algorithms have since received intensive
attention due to its efficiency in many $\ell_1$ related constrained
optimization problems~\cites{Yin:2008bregman,yin2013error}. The
equivalence of the Bregman iteration with the alternating direction
method of multipliers (ADMM), Douglas-Rachford splitting and augmented
Lagrangian method can be found
in~\cites{Yin:2008bregman,Esser:2009CAM,Wu:2010SIAM,
  combettes2011proximal}.

Let's first recall the algorithm proposed in \cite{LaiLuOsher:15}. By introducing auxiliary variables $Q $ and $R$, the optimization
problem \eqref{eqn:LDM_0T} is equivalent to
\begin{equation}\label{eqn:LDM_0T_P_split}
  \begin{aligned}
    & \min_{P, Q, R \in \RR^{n\times n}} \frac{1}{\eta} \vertiii{Q}_1 + \tr(H P)\\
    & \text{s.t.} \  Q = P,\, R = P,\,  \tr P = N,\, 
    0 \preceq R \preceq I,
  \end{aligned}
\end{equation}
The method of Bregman iteration suggests to approach \eqref{eqn:LDM_0T_P_split} by solving:
\begin{align}
  & (P^k,Q^k,R^k) = \arg\min_{P, Q, R \in \RR^{n\times n}} \frac{1}{\eta} \vertiii{Q}_1 + \tr(H P) \label{eqn:LDM_0T_PQR}\\ 
  & \nonumber \hspace{8em} + \frac{\lambda}{2} \|P - Q + B^{k-1}\|_F^2 + \frac{r}{2}\|P - R + D^{k-1}\|^2_F \\   
  & \qquad   \qquad       \text{s.t.}  \qquad \tr P = N, \,  0 \preceq R \preceq I,   \nonumber \\
  & B^k = B^{k-1} + P^k - Q^k,   \\
  & D^k = D^{k-1} + P^k - R^k,
\end{align}
where variables $B, D$ are essentially Lagrangian multipliers and
parameters $r, \lambda$ control the penalty terms.  Solving
$P^k, Q^k, R^k$ in \eqref{eqn:LDM_0T_PQR} alternatively leads to
algorithm~\ref{alg:LDM_0T}, proposed in \cite{LaiLuOsher:15}.

\begin{algorithm2e}[h]
\caption{Zero temperature localized density matrix minimization}
\label{alg:LDM_0T}
Initialize $Q^0 = R^0 = P^0 \in\C = \{ P\in\RR^{n\times n}~|~ P = P^{\TT},\, \tr P = N,\, 0 \preceq P \preceq I \} , B^0 = D^0 = 0$

\While{``not converge"}{
  $\displaystyle P^k =\Gamma^k - \frac{\tr ( \Gamma^k) - N}{n},
  \quad  \text{where } 
  \Gamma^k = \frac{\lambda}{\lambda+r}(Q^{k-1} - B^{k-1}) +
  \frac{r}{\lambda+r}(R^{k-1} - D^{k-1}) - \frac{1}{\lambda +r} H $.
  
 $\displaystyle Q^k = \operatorname{Shrink}\left(P^k +B^{k-1},\frac{1}{\lambda\eta} 
\right) = \operatorname{sign}(P^k + B^{k-1})\max\left\{|P^{k} +B^{k-1}| - \frac{1}{\lambda\eta},0\right\} $.

 $R^k = \displaystyle V\min\{\max\{\Lambda,0\},1\}V^T , \text{ where } [V,~ \Lambda] =
  \operatorname{eig}(P^k + D^{k-1})$.
  
 $B^{k} = B^{k-1} +  P^k - Q^k$.
 
 $D^{k} = D^{k-1} +  P^k - R^k$.
}
\end{algorithm2e}

Similarly, the LDM for the finite temperature case proposed in
\eqref{eqn:LDM_FT} can also be solved based on Bregman iteration. By
introducing auxiliary variables $Q$ and $R$, the optimization problem
\eqref{eqn:LDM_FT} is equivalent to
\begin{equation}\label{eqn:LDM_FT_P_split}
  \begin{aligned}
    & \min_{P, Q, R \in \RR^{n\times n}} \frac{1}{\eta} \vertiii{Q}_1 + \tr(H P) + \frac{1}{\beta} \tr\Big\{P\ln P + (1- P)\ln(1 - P)\Big\}  \\
    & \text{s.t.} \ Q = P,\, R = P,\, \tr Q = N,\, 0 \preceq R \preceq
    I.
  \end{aligned}
\end{equation}
Note that we have explored the flexibility of the augmenting approach
to impose the trace constraint on $Q$, which is equivalent of imposing
the constraint on $P$. The minimization \eqref{eqn:LDM_0T_P_split} can be
iteratively solved by:
\begin{equation} \label{eqn:LDM_FT_PQR}
\begin{aligned}
  & (P^k,Q^k,R^k) = \arg\min_{P, Q, R \in \RR^{n\times n}} \frac{1}{\eta} \vertiii{Q}_1 + \tr(H P) + \frac{1}{\beta} \tr\Big\{P\ln P + (1- P)\ln(1 - P)\Big\} \\
  & \hspace{6em} +  \frac{\lambda}{2} \|P - Q + B^{k-1}\|_F^2 + \frac{r}{2}\|P - R + D^{k-1}\|^2_F  \\
  & \qquad  \text{s.t.} \qquad \tr Q = N,\, 0 \preceq R \preceq I,     \\
  & B^k = B^{k-1} + P^k - Q^k,   \\
  & D^k = D^{k-1} + P^k - R^k,
\end{aligned}
\end{equation}
where variables $B, D$ and parameters $r, \lambda$ have the similar roles as in \eqref{eqn:LDM_0T_PQR}.
Solving $P^k, Q^k, R^k$ in \eqref{eqn:LDM_FT_PQR} alternatively leads to the following three sub-optimization problems. 
\begin{enumerate}[$1^{\circ}$]
\item $\displaystyle P^k = \arg\min_{P \in \RR^{n\times n}}\tr(H P) +
  \frac{1}{\beta} \tr\Big\{P\ln P + (1- P)\ln(1 - P)\Big\} +
  \frac{\lambda}{2} \|P - Q^{k-1} + B^{k-1}\|_F^2 \\  + \frac{r}{2}\|P
  - R^{k-1} + D^{k-1}\|^2_F$.
\item $\displaystyle Q^k = \arg\min_{Q \in \RR^{n\times n}}
  \frac{1}{\eta}\vertiii{Q}_1+ \frac{\lambda}{2} \|P^k - Q +
  B^{k-1}\|_F^2, \quad \text{s.t.} \quad \tr Q = N $.
\item $R^k = \displaystyle\arg\min_{R  \in \RR^{n\times n}}  \|P^{k} - R + D^{k-1}\|^2_F, \quad \text{s.t.} \quad     0 \preceq R \preceq I$.
\end{enumerate}
Note that the sub-minimization problem $3^\circ$ of algorithm~\ref{alg:LDM_FT} can be solved explicitly, similarly as before 
\begin{equation*}
  R^k = V\min\{\max\{\Lambda,0\},1\}V^T , \text{ where } [V,~ \Lambda] =
  \operatorname{eig}(P^k + D^{k-1}).
\end{equation*}
To solve the sub-problem $2^\circ$ of algorithm~\ref{alg:LDM_FT}, let's denote by $Q_o$ the off-diagonal part of $Q$ and write $Q_d$ as the diagonal vector of $Q$. Namely, we have $Q = Q_o + \mathrm{diag}(Q_d)$. We also use similar notations for all other matrices. Then the solution of the sub-problem $2^\circ$ can be written as $Q^k = Q_o^k + \mathrm{diag}(Q_d^k)$, where $Q_o^k$ and $Q_d^k$ are given by (denoting $M^k = P^k + B^{k-1}$)
\begin{align}
  Q^k_o &= \operatorname{Shrink}\left(M^k_o,
    (\lambda\eta)^{-1} \right)
  = \operatorname{sign}(M^k_o  ) 
  \max\left\{ \abs{M^k_o  }  - (\lambda\eta)^{-1}, 0\right\}, \label{eqn:Q_o}\\
  Q^k_d &= \arg\min_{Q_d\in \RR^{n}}
  \frac{1}{\eta}\norm{Q_d}_1+ \frac{\lambda}{2} \| Q_d - M^k_d\|^2, \quad \text{s.t.} \quad \ {\bf 1}^TQ_d = N. \label{eqn:Q_d}
\end{align}
here, ${\bf 1}$ is a $n\times 1$ vector with all elements 1. Note that \eqref{eqn:Q_d} is a convex optimization problem with size $n$, which can be efficiently solved using the following Bregman iteration. 

\begin{equation}
\begin{aligned}
\displaystyle Q^{k,s}_d &= \operatorname{Shrink}\left(\frac{\lambda}{\lambda +r}M^k_d + \frac{r}{\lambda +r} (v^{s-1} - b^{s-1}),
    \frac{1}{(\lambda+r)\eta} \right). \\
v^s &= Q^{k,s}_d + b^{s-1} - \frac{1}{n}\left( {\bf 1}^T ( Q^{k,s}_d + b^{s-1}) - N\right). \\
b^{s} &= b^{s-1} +  Q^{k,s}_d - v^s.
\end{aligned}
\end{equation}


Next, we propose to iteratively solve the sub-minimization $1^\circ$ in algorithm~\ref{alg:LDM_FT}. 
\begin{multline}\label{eq:kkt}
\displaystyle  P^{k} =  \arg\min_{P \in \RR^{n\times n}}\tr(H P) + \frac{1}{\beta} \tr\Big\{P\ln P + (1- P)\ln(1 - P)\Big\} + \frac{\lambda}{2} \|P - Q^{k-1} + B^{k-1}\|_F^2 \\ 
+ \frac{r}{2}\|P - R^{k-1} + D^{k-1}\|^2_F.
\end{multline}
Note that this is a convex problem, and hence the existence of
uniqueness of $P^k$ is guaranteed. By the KKT condition, $P^k$ satisfies 
\begin{equation}
  H + \frac{1}{\beta} (\ln P^k - \ln (1 - P^k)) + \lambda(P^k - Q^{k-1} + B^{k-1}) + r ( P^k - R^{k-1} + D^{k-1}) = 0. 
\end{equation}
Equivalently, we may write the above equation as 
\begin{equation}
  P^k = \Bigl[ 1 + \exp \Bigl( \beta \bigl(
  H + \lambda(P^{k} - Q^{k-1} + B^{k-1}) + r ( P^{k} - R^{k-1} + D^{k-1}) \bigr) \Bigr) \Bigr]^{-1}. 
\end{equation}
Thus a natural iterative scheme to solve for $P^k$ is given by 
\begin{align}
  & Z^{l+1} = \frac{1}{1 + \exp(\beta Y^{l})}; \\
  & Y^{l} = H + \lambda(Z^l - Q^{k-1} + B^{k-1}) + r ( Z^l - R^{k-1} +
  D^{k-1}).
\end{align}
The following proposition gives the convergence of the above scheme.
\begin{prop}
\label{prop:fixedptconvg}
  Assume $\beta ( \lambda + r) < 4$. Given any symmetric matrices $Q, R, B, D\in\RR^{n\times n}$,
  the iteration scheme
  \begin{align*}
    & Z^{l+1} = \frac{1}{1 + \exp(\beta Y^{l})}; \\
    & Y^{l} = H + \lambda(Z^l - Q + B) + r ( Z^l - R + D) 
  \end{align*}
  converges exponentially as $l \to \infty$:
  \begin{equation}
    \norm{Z^l - Z^{\ast}} = \bigl(\beta ( \lambda + r) / 4 \bigr)^l 
    \norm{Z^0 - Z^{\ast}}, 
  \end{equation}
  where $Z^{\ast}$ is the unique solution of \eqref{eq:kkt}.
\end{prop}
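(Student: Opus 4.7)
The plan is to recognize the iteration as fixed-point iteration for the contraction $T : \RR^{n \times n} \to \RR^{n \times n}$ defined by
\begin{equation*}
T(Z) = \bigl[ 1 + \exp\bigl(\beta(H + (\lambda + r) Z + C)\bigr)\bigr]^{-1}, \qquad C = -\lambda Q + \lambda B - r R + r D,
\end{equation*}
and then to verify that $T$ contracts in the Frobenius norm with constant $\beta(\lambda + r)/4$. The KKT equation rewritten just above \eqref{eq:kkt} identifies $Z^{\ast}$ as a fixed point of $T$, and strict convexity of the sub-problem guarantees it is the unique minimizer; contraction then gives both uniqueness of the fixed point and the stated exponential rate by direct induction on $l$.

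To obtain the contraction estimate I would decompose $T = f \circ g$, where $g(Z) = H + (\lambda + r) Z + C$ and $f(Y) = (1 + \exp(\beta Y))^{-1}$ is defined through the symmetric functional calculus. The affine map $g$ is globally Lipschitz with constant $\lambda + r$ in Frobenius norm, since $\|g(Z_1) - g(Z_2)\|_F = (\lambda + r) \|Z_1 - Z_2\|_F$. For the nonlinear part, I would invoke the standard fact that if $\varphi : \RR \to \RR$ is $L$-Lipschitz, then for any symmetric matrices $A, B$ one has $\|\varphi(A) - \varphi(B)\|_F \leq L \|A - B\|_F$ (a consequence of the Daleckii--Krein formula for the Fr\'echet derivative of matrix functions, or equivalently a direct diagonalization argument using divided differences). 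Applying this to $\varphi(y) = (1 + \exp(\beta y))^{-1}$ and computing
\begin{equation*}
\abs{\varphi'(y)} = \frac{\beta \exp(\beta y)}{(1 + \exp(\beta y))^2} \leq \frac{\beta}{4},
\end{equation*}
with the maximum attained at $y = 0$, gives that $f$ is Lipschitz with constant $\beta/4$ on symmetric matrices.

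Composing the two bounds yields
\begin{equation*}
\|T(Z_1) - T(Z_2)\|_F \leq \frac{\beta}{4} \cdot (\lambda + r) \|Z_1 - Z_2\|_F = \frac{\beta(\lambda + r)}{4} \|Z_1 - Z_2\|_F,
\end{equation*}
which is a genuine contraction by the hypothesis $\beta(\lambda + r) < 4$. Setting $Z_1 = Z^l$ and $Z_2 = Z^{\ast}$ and iterating gives the claimed geometric rate (interpreting the displayed equality as an inequality, which is all the standard Banach argument provides).

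The main obstacle is justifying the matrix Lipschitz bound in Frobenius norm: a naive scalar calculation does not immediately transfer to matrices because the spectral decomposition of $Y^l$ and $Y^{\ast}$ may differ. I would handle this cleanly by diagonalizing each argument and writing the difference as an entrywise Hadamard product against a divided-difference matrix $[\varphi^{[1]}(\mu_i, \nu_j)]_{ij}$, whose entries are bounded by $\|\varphi'\|_\infty = \beta/4$; this reduces the bound to a direct estimate on the Frobenius norm and avoids any appeal to operator norm inequalities. Once this lemma is recorded, the rest of the argument is routine.
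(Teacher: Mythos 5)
Your proposal is correct and follows essentially the same route as the paper: bound the derivative of the Fermi--Dirac function by $\beta/4$, compose with the affine map of Lipschitz constant $\lambda+r$, and iterate the resulting contraction. The one place you go beyond the paper is in explicitly justifying the transfer of the scalar Lipschitz bound to the matrix function via the divided-difference (Daleckii--Krein) argument in the Frobenius norm --- a step the paper's proof asserts without comment --- and in correctly observing that the displayed equality in the statement should really be an inequality $\norm{Z^l - Z^{\ast}} \leq \bigl(\beta(\lambda+r)/4\bigr)^l \norm{Z^0 - Z^{\ast}}$.
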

\begin{proof}
  Denote the Fermi-Dirac function
  \begin{equation*}
    \phi_{\beta}(x) = \frac{1}{1 + \exp(\beta x)}.
  \end{equation*}
  We have then 
  \begin{equation*}
    \max_x\; \abs{\phi_{\beta}'(x)} = \max_x\; \abs{ \frac{\beta \exp(\beta x)}{\bigl(1 + \exp(\beta x)\bigr)^2} } \leq \beta / 4. 
  \end{equation*}
  Therefore, 
  \begin{equation*}
    \norm{Z^{l+1} - Z^{\ast}}  = \norm{ \phi_{\beta}(Y^l) - \phi_{\beta}(Y^{\ast})} 
    \leq \frac{\beta}{4} \norm{Y^l - Y^{\ast}}
    = \frac{\beta (\lambda + r)}{4} \norm{Z^{l} - Z^{\ast}}, 
  \end{equation*}
  where $Y^{\ast} = H + \lambda (Z^{\ast} - Q + B) + r (Z^{\ast} - R +
  D)$. The proposition then follows by iterating with respect to
  $l$. 
\end{proof}

In summary, we arrive at algorithm~\ref{alg:LDM_FT} for \eqref{eqn:LDM_FT}.
\begin{algorithm2e}
\caption{Finite temperature localized density matrix minimization.}
\label{alg:LDM_FT}
Initialize $Q^0 = R^0 = P^0 \in\mathcal{C} , B^0 = D^0 = 0$

\While{``not converge"}{
   \While{``not converge"}{ 
           $Y^{k,l} =  H + \lambda(Z^{k,l-1} - Q^{k-1} + B^{k-1}) + r ( Z^{k,l-1} - R^{k-1} + D^{k-1})$.
           
           $\displaystyle Z^{k,l} = \frac{1}{ 1 + \exp(\beta Y^{k,l})}$ .
          }
          
   $P^k = Z^{k,l}$.
   
 $\displaystyle Q^k  = Q_o^k + \mathrm{diag}(Q_d^k)$, where $Q_o^k$ and $Q_d^k$ are given by \eqref{eqn:Q_o} and \eqref{eqn:Q_d}.
  
  $R^k = V\min\{\max\{\Lambda,0\},1\}V^T , \text{ where } [V,~ \Lambda] =
  \operatorname{eig}(P^k + D^{k-1})$.
  
 $B^{k} = B^{k-1} +  P^k - Q^k$.
 
 $D^{k} = D^{k-1} +  P^k - R^k$.

}
\end{algorithm2e}

\begin{remark}
  In practice, it is not necessary to require all inner iterations
  convergence in algorithm \ref{alg:LDM_FT}. In our numerical
  experiments, we run all the inner iterations in algorithm
  \ref{alg:LDM_FT} within a given small number of steps, as $Z^{l}$
  converges exponentially by Proposition \ref{prop:fixedptconvg}.
\end{remark}

\begin{theorem}[Convergence of Algorithm~\ref{alg:LDM_0T} and Algorithm~\ref{alg:LDM_FT}]\leavevmode
\label{thm:LDM_ADM_Conv} 
  \begin{enumerate}
  \item The sequence $\big\{(P^k, Q^k, R^k)\big\}_k$ generated by
  algorithm~\ref{alg:LDM_0T} from any starting point converges to a
  minimum of the variational problem \eqref{eqn:LDM_0T}.
  \item The sequence $\big\{(P^k, Q^k, R^k)\big\}_k$ generated by
  algorithm~\ref{alg:LDM_FT} from any starting point converges to a
  minimum of the variational problem \eqref{eqn:LDM_FT}.
  \end{enumerate}
\end{theorem}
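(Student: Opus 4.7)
The plan is to recognize both algorithms as standard two-block ADMM (equivalently split Bregman) applied to the split formulations \eqref{eqn:LDM_0T_P_split} and \eqref{eqn:LDM_FT_P_split}, and invoke the classical convergence theory surveyed in the references cited at the beginning of this section.

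First, I group variables by setting $X = P$ and $Y = (Q, R)$, so that the constraints $Q = P$, $R = P$ become $AX + BY = 0$ with $A = (-I, -I)^{\TT}$ and $B = I$, both of full column rank. The objective of \eqref{eqn:LDM_0T_P_split} separates as $f(X) + g(Y)$ with
\[
f(P) = \tr(HP) + \delta_{\{P = P^{\TT},\, \tr P = N\}}(P),
\qquad
g(Q,R) = \frac{1}{\eta} \vertiii{Q}_1 + \delta_{\{0 \preceq R \preceq I\}}(R),
\]
where $\delta_S$ denotes the indicator of a convex set; both $f$ and $g$ are closed, proper, and convex. Direct inspection shows that the updates in Algorithm~\ref{alg:LDM_0T} are exactly the ADMM updates for this decomposition: the $P$-step is the unique affine minimizer displayed in the algorithm, while the $(Q, R)$-step decouples because $g$ separates, yielding the entrywise shrinkage and eigenvalue clipping respectively. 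The feasible set is nonempty and compact, the objective continuous, and the coupling constraints purely linear equalities; hence a primal minimizer exists and, by Fenchel--Rockafellar duality, so do optimal multipliers for the constraints $Q = P$, $R = P$. The standard two-block ADMM convergence theorem then yields $(P^k, Q^k, R^k) \to (P^*, Q^*, R^*)$ with $P^* = Q^* = R^*$ a minimizer of \eqref{eqn:LDM_0T}, proving part~(1).

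For Algorithm~\ref{alg:LDM_FT}, the same template applies to \eqref{eqn:LDM_FT_P_split} with two modifications: the trace constraint is absorbed into $g$ (acting on $Q$), and $f$ acquires the strictly convex Fermi--Dirac entropy $\frac{1}{\beta} \tr \varphi(P)$. Both remain closed, proper, and convex, and the $P$-subproblem becomes strictly convex with unique minimizer characterized by \eqref{eq:kkt}. Under $\beta(\lambda + r) < 4$, Proposition~\ref{prop:fixedptconvg} shows the inner fixed-point iteration converges exponentially to this minimizer, so for the purposes of the outer convergence argument we may treat $P^k$ as the exact ADMM update. The two-block ADMM theorem then gives part~(2).

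The main obstacle is the inexactness of the inner $P$-solver in Algorithm~\ref{alg:LDM_FT}: the cited ADMM results assume exact subproblem solutions, whereas $P^k$ is only approximated by the inner iteration. I would handle this either by stating the outer theorem under the assumption of exact inner solves (as Algorithm~\ref{alg:LDM_FT} is literally written, with the inner ``while not converge'' loop) or by invoking an inexact-ADMM convergence result with summable error tolerances; the exponential rate in Proposition~\ref{prop:fixedptconvg} makes either justification routine. A secondary point is that the entropy terms behave as a logarithmic barrier on the spectrum of $P$, which keeps iterates in the interior of $[0, I]$ and so preserves smoothness of the augmented Lagrangian along the trajectory, avoiding any boundary issues.
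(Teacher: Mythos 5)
Your proposal is correct and follows essentially the same route as the paper: the paper's own proof simply cites \cite{LaiLuOsher:15}, where convergence is established for a generic convex energy $E(P)$ via exactly this identification of the split Bregman scheme with two-block ADMM on the blocks $P$ and $(Q,R)$, so your write-up is a self-contained version of the argument the paper outsources. Your explicit acknowledgment that the finite-temperature result is stated under exact inner solves (with Proposition~\ref{prop:fixedptconvg} making an inexact-ADMM variant routine) is a point the paper glosses over but handles implicitly in the same way.
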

\begin{proof} The convergence of algorithm~\ref{alg:LDM_0T} is proved
  in~\cite{LaiLuOsher:15}. The proof in fact also applies verbatim to
  algorithm~\ref{alg:LDM_FT} as it is written for generic convex
  energy $E(P)$.
\end{proof}

While the minimization problems \eqref{eqn:LDM_0T} and
\eqref{eqn:LDM_FT} are convex and the proposed algorithms converge to
the minimizers by theorem~\ref{thm:LDM_ADM_Conv}, it is also clear
that the computational efficiency of the algorithms~\ref{alg:LDM_0T}
and \ref{alg:LDM_FT} is limited by the eigen-decomposition
in the step of eigenvalue thresholding, and also the inner iteration
in the finite temperature case. The computational cost for standard
eigen-decompsition algorithm is $\Or(n^3)$, which is rather expensive
for large scale calculations. In the next section, inspired by ideas
from linear scaling algorithms for electronic structure, we propose
approximate algorithms to solve \eqref{eqn:LDM_0T} and
\eqref{eqn:LDM_FT} by replacing eigendecompsition with polynomial
functions acting on matrices. The resulting algorithms have
computational cost linearly scaled with the matrix size $n$.

\section{Approximation by banded matrices and linear scaling algorithms}\label{sec:linear}

Based on results indicated in theorems~\ref{thm:LDM_0T_consist} and
\ref{thm:finitetemp}, the proposed LDM serves as a nice approximation
of the true density matrix.  More importantly, similar to many
$\ell_1$ regularization methods developed for compressed sensing
problems, the minimizing density matrices are expected to have certain
sparse structure.  More precisely, we denote the set of all banded
matrices with band width $w$ as
\begin{equation}
  \mathcal{B}_w = \left\{ P = (p_{ij}) \in \mathbb{R}^{n\times n} ~| P = P^T,   p_{ij} = 0,~ \forall   ~  j \notin \mathcal{N}^w_i \right \},
\end{equation}
where $\mathcal{N}^{w}_i$ denotes as a $w$-neighborhood of $i$. In
particular, for the $1D$ examples considered later in this paper, the
neighborhood is chosen as
\begin{equation*}
  \mathcal{N}_i^w = \{ j\in \{1, 2, \ldots, n\}  \mid \abs{i
    -j} \mod n  \leq w\},
\end{equation*}
for an given band width
$w \in \bigl\{0, 1, 2, \cdots, [\frac{n}{2}]\bigr\}$ ($w$ is typically
chosen much smaller than $n/2$). For example, consider a banded
discretized Hamiltonian $H$ (e.g., a central difference discretization
of Hamiltonian $-\frac{1}{2}\Delta +V$), numerical results
in~\cite{LaiLuOsher:15} suggest that the LDM is banded with a small
band width. We remark that, however, theoretical validation of this
observation is still open and remains to be investigated in future
works.

This  motivates the following variational problems to
approximate the LDMs proposed in \eqref{eqn:LDM_0T} and
\eqref{eqn:LDM_FT} by simply constraining the problems on the set of
banded matrices.
\begin{equation}\label{eqn:Band_LDM_0T}
  \begin{aligned}
    & \min_{P \in \RR^{n \times n}} \E^w_{\infty, \eta}(P) =  \min_{P \in \RR^{n\times n}} \tr(H P) + \frac{1}{\eta} \vertiii{P}_1, \\
    & \text{s.t.} \quad \tr P = N,\; P = P^{\TT}, \; 0 \preceq P \preceq I,  \; P\in\B_w.
  \end{aligned}
\end{equation}
\begin{equation}\label{eqn:Band_LDM_FT}
  \begin{aligned}
    & \min_{P \in \RR^{n \times n}} \E^w_{\beta, \eta}(P) 
    = \min_{P \in \RR^{n\times n}} \tr(H P) + \frac{1}{\beta} \tr\Big\{P\ln P + (1- P)\ln(1 - P)\Big\} + \frac{1}{\eta}\vertiii{P}_1\\
    & \hspace{2.5cm} \text{s.t.}  \quad  \tr P = N, \; P = P^{\TT}, \;  0 \preceq P \preceq I,  \; P\in\B_w.
  \end{aligned}
\end{equation}
Note that the above two optimization problems are still convex. This
is in contrast to the usual minimization problems developed in the
literature of linear scaling algorithms
\cite{McWeeny:60,Challacombe:99}.

The advantage of considering banded matrix is that it allows for
linear scaling algorithms. Let us consider first the eigenvalue
thresholding:
\begin{equation}\label{eq:RM}
  R =  V\min\{\max\{\Lambda,0\},1\}V^T , \text{ where } [V,~\Lambda] =
  \operatorname{eig}(M).
\end{equation}
Observe that the above formula can be written as a matrix function
\begin{equation}
  R = h(M), \qquad \text{where }  h(x) = \min\{ \max\{ x, 0\}, 1\} = 
  \min\{1/2(|x| +x),1\}.
\end{equation} 
The standard evaluation of the matrix function $h$ using spectral
theory needs diagonalization as in \eqref{eq:RM}. The key idea is to
approximate general matrix functions by polynomials, as the
polynomials acting on matrix only involves products and sums, which
can take advantage of the bandedness of the matrix. This type of
algorithms has been explored extensively in the literature of linear
scaling algorithms (see \cite{GoedeckerColombo:94} and the review articles
\cites{Goedecker:99, Niklasson:11, BowlerMiyazaki:12}).

More specifically in the current context, we will approximate the hard
thresholding function $h(x) = \min\{1/2(|x| +x),1\}$ using the
Chebyshev polynomial approximation 
\cite{Trefethen:13}. Recall that the standard Chebyshev polynomials
approximation minimizes the $L^{\infty}$ error on the interval
$[-1,~1]$, hence, we will first rescale the matrix such that its
eigenvalue lies in the interval.

For this, we first use the power method to estimate the largest
eigenvalue $\lambda_{\max}$ and the smallest eigenvalue
$\lambda_{\min}$ of $M$. An affine scaling of $M$ gives $s(M)$, whose
eigenvalue is in $[-1, 1]$, where
\begin{equation*}
  s(M) = \frac{2}{\lambda_{\max} - \lambda_{\min}} ( M - \lambda_{\min}) - 1. 
\end{equation*}
Note that its inverse is given by 
\begin{equation*}
  s^{-1}(M) =
  \frac{\lambda_{\max} - \lambda_{\min} }{2}  M + \frac{ \lambda_{\min} +
    \lambda_{\max} }{2}.
\end{equation*}
We approximate  
\begin{equation*}
  h(M) = h\circ s^{-1} ( s(M)) \approx T^m_{HT} (s(M)), 
\end{equation*}
where $T^m_{HT}$ is the Chebyshev polynomial approximation of $h\circ
s^{-1}$ on $[-1, 1]$.  Figure~\ref{fig:ChebApproximation}(a)
illustrates the Chebyshev polynomial approximation of $h$ (i.e.,
$h\circ s^{-1}$ assuming the scaling function $s$ is identity) by $40$
degree polynomials. Higher degree polynomial is needed if $M$ has a
larger spectrum span. In our numerical tests, it seems that fixing the
degree be $40$ gives satisfactory result for the test examples.

As acting $T^m_{HT}$ on $s(M)$ involves matrix products which will
increase the matrix band-width. The resulting matrix is projected
(with respect to Frobenius norm) on the space $\B_w$ to satisfy the
constraint. This projection is explicitly given by the truncation
operator $\T_{w} : \RR^{n\times n} \rightarrow \B_w$ which sets all
matrix entries to $0$ outside the $w$-band. 

To sum up, the eigenvalue thresholding can be approximated by
algorithm~\ref{alg:EigenHardThresholding}.
\begin{algorithm2e}[h]
  \caption{A linear scaling algorithm for eigenvalue thresholding.}
  \label{alg:EigenHardThresholding}
  \SetKwFunction{EigenThres}{EigenThresviaChebyPoly}%
  $\wh{P}~=~${\EigenThres{P,~m}}{
    \SetKwInOut{Input}{input}\SetKwInOut{Output}{output}
    \SetKwComment{Comment}{}{}
    
    \KwIn{$P\in\B_w, ~m$}
    \KwOut{$\wh{P}\in\B_w$}
    
    Estimate $\lambda_{\max}$ and $\lambda_{\min}$ of $P$ using the
    power method.
    
    Compute $T^m_{HT}(x)$ as the Chebyshev polynomial approximation of
    $h\left (\frac{\lambda_{\max} - \lambda_{\min} }{2}x + \frac{
        \lambda_{\min} + \lambda_{\max} }{2} \right )$.
    
    $\displaystyle P_{new} =  \frac{2 }{ \lambda_{\max} - \lambda_{\min}} (P - \lambda_{\min} )- 1$.
    
    $\wh{P} = \displaystyle \T_{w} T^m_{HT}(P_{new})$.
  }
\end{algorithm2e}
For banded matrix with band width $w$, the computational cost of this
algorithm is $O(nm^2w)$ which scale linearly with respect to the
matrix dimension $n$.  Therefore, for matrices in $\B_w$, this
algorithm has much better computational efficiency in particular for
matrices of large size.

Replacing the eigenvalue thresholding in
algorithm~\ref{alg:LDM_0T_linear} by
algorithm~\ref{alg:EigenHardThresholding}, we obtain the following
algorithm for computing LDM in the zero temperature case.
\begin{algorithm2e}[h]
  \caption{A linear scaling algorithm for computing LDM at zero
    temperature.}
  \label{alg:LDM_0T_linear}

  Initialize $Q^0 = R^0 = P^0 \in\C \bigcap \B_w , B^0 = D^0 = 0$,
  choose $m_{et}$.

  \While{``not converge"}{
    $\displaystyle P^k = \Gamma^k - \frac{\tr (\Gamma^k) - N}{n},
    \quad \text{where } \Gamma^k = \frac{\lambda}{\lambda+r}(Q^{k-1} -
    B^{k-1}) + \frac{r}{\lambda+r}(R^{k-1} - D^{k-1}) -
    \frac{1}{\lambda +r} H $.
  
    $\displaystyle Q^k = \operatorname{Shrink}\left(P^k
      +B^{k-1},\frac{1}{\lambda\eta} \right)$.

    $R^k = \textrm{\sf EigenThresviaChebyPoly}(P^k + D^{k-1},m_{et})$.
 
    $B^{k} = B^{k-1} + P^k - Q^k$.
 
    $D^{k} = D^{k-1} + P^k - R^k$.  }
\end{algorithm2e}
We remark that all the steps in algorithm~\ref{alg:LDM_0T_linear}
preserves the bandedness of the matrices: For the approximate
eigenvalue thresholding, this is guaranteed by the explicit projection
step; it can be easily checked for the other steps. Hence, the
iterates of the algorithm $(Q, R, P, B, D)$ will remain in $\B_w$ as
the initial condition lies in the set. Thus, the whole algorithm is
linear scaling.

\smallskip 

For the finite temperature case, we need a further approximation for
evaluating the Fermi-Dirac matrix function $(1 + \exp(\beta Y))^{-1}$,
as direct computation also involves matrix diagonalization, which is
$\Or(n^3)$. Following the same procedure as in
algorithm~\ref{alg:EigenHardThresholding}, we can achieve a linear
scaling algorithm by a Chebyshev polynomial approximation of the
Femi-Dirac function $\phi_{\beta}(x) = (1 + \exp(\beta x))^{-1}$. 

This leads to an algorithm $\textrm{\sf FermiDiracviaChebyPoly}(Y,m)$
for approximating the Fermi-Dirac operation by simply replacing the
hard thresholding function $h(x)$ with the Fermi-Dirac function
$\phi_\beta(x)$ in algorithm \ref{alg:EigenHardThresholding}. Hence,
we omit the details. Figure~\ref{fig:ChebApproximation}(b) shows the
approximation of $\phi_\beta(x)$ for $\beta = 10$ with a degree $40$
polynomial. Nice agreement is observed. We remark that if the
temperature is lower (so that $\beta$ is larger), a higher degree
polynomial is needed as the function $\phi_{\beta}(x)$ has larger
derivatives. In fact, as $\beta \to \infty$, $\phi_{\beta}$ converges
to a Heaviside function with jump at $x = 0$.
Thus, we arrive at the following algorithm~\ref{alg:LDM_FT_Linear} for
computing LDM of the finite temperature case with linear scaling
computation cost with respect to the matrix size $n$.
\begin{figure}[htp]
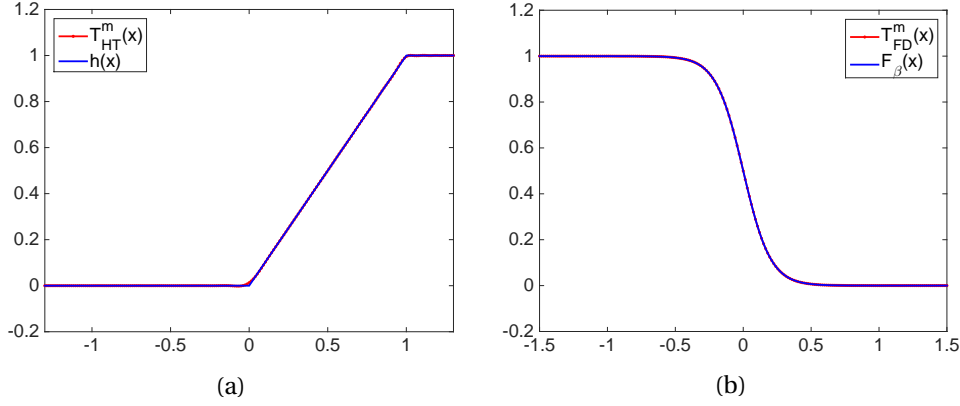

\label{fig:ChebApproximation}
\begin{center}
\begin{minipage}{0.47\linewidth}
\includegraphics[width=1\linewidth]{SoftThres_Cheby.eps}
\centering (a)
\end{minipage}\hfill
\begin{minipage}{0.48\linewidth}
\includegraphics[width=1\linewidth]{FemiDirac_Cheby.eps}\\
\centering (b)
\end{minipage}\hfill
\end{center}
\caption{(a): Chebyshev approximation of $h(x)$ with $m = 50$ degree
  polynomial. (b): Chebyshev approximation of $\phi_{\beta}(x)$ for
  $\beta = 10$ with $m = 40$ degree polynomial.}
\end{figure}

%
%
%
%
%
%
\begin{algorithm2e}
  \caption{A linear scaling algorithm for computing LDM at finite
    temperature.}
\label{alg:LDM_FT_Linear}
Initialize $Q^0 = R^0 = P^0 \in\C \bigcap \B_w , B^0 = D^0 = 0$, choose $m_{fd},m_{et}$

\While{``not converge"}{
   \While{``not converge"}{ 
           $Y^{k,l} =  H + \lambda(Z^{k,l-1} - Q^{k-1} + B^{k-1}) + r ( Z^{k,l-1} - R^{k-1} + D^{k-1})$.
           
           $\displaystyle Z^{k,l} = \textrm{\sf FermiDiracviaChebyPoly}(Y^{k,l},m_{fd})$ .
          }
          
   $P^k = Z^{k,l}$.
   
 $\displaystyle Q^k  = Q_o^k + \mathrm{diag}(Q_d^k)$, where $Q_o^k$ and $Q_d^k$ are given by \eqref{eqn:Q_o} and \eqref{eqn:Q_d}.
   
  $R^k =\textrm{\sf EigenThresviaChebyPoly}(P^k + D^{k-1}, m_{et})$.
  
 $b^{k} = B^{k-1} +  P^k - Q^k$.
 
 $d^{k} = D^{k-1} +  P^k - R^k$.

}
\end{algorithm2e}

We remark that since polynomial products are applied to approximate
the hard thresholding function and the Femi-Dirac operation,
approximation errors have been introduced in each iteration of the
proposed algorithms~\ref{alg:LDM_0T_linear} and \ref{alg:LDM_FT_Linear}. Therefore, the convergence proof
used in theorem~\ref{thm:LDM_ADM_Conv} as we discussed in
~\cite{LaiLuOsher:15} can not be directly applied, although our
numerical results in Section~\ref{sec:experiments} illustrate
satisfactory approximation to model \ref{eqn:DM_0T} and model
\ref{eqn:DM_FT}. Note that similar issues arise in theoretical
understanding of convergence of other iterative linear scaling
algorithms, for example \cite{GarciaLuXuanE:09}.

\section{Numerical Experiments}
\label{sec:experiments}
In this section, numerical experiments are presented to demonstrate
the proposed models and algorithms for LDM computing for at zero and
finite temperatures. We conduct numerical comparisons of our results
between LDM computation with and without the linear scaling
algorithms, which indicates satisfactory results of the proposed
linear scaling algorithms based on approximation by banded
matrices. We further illustrate efficiency of the proposed linear
scaling algorithms. All numerical experiments are implemented by
\textsf{MATLAB} in a PC with a 16G RAM and a 2.7 GHz CPU.

\begin{figure}[ht]
\centering
\begin{minipage}{0.6\linewidth}
\includegraphics[width=1\linewidth]{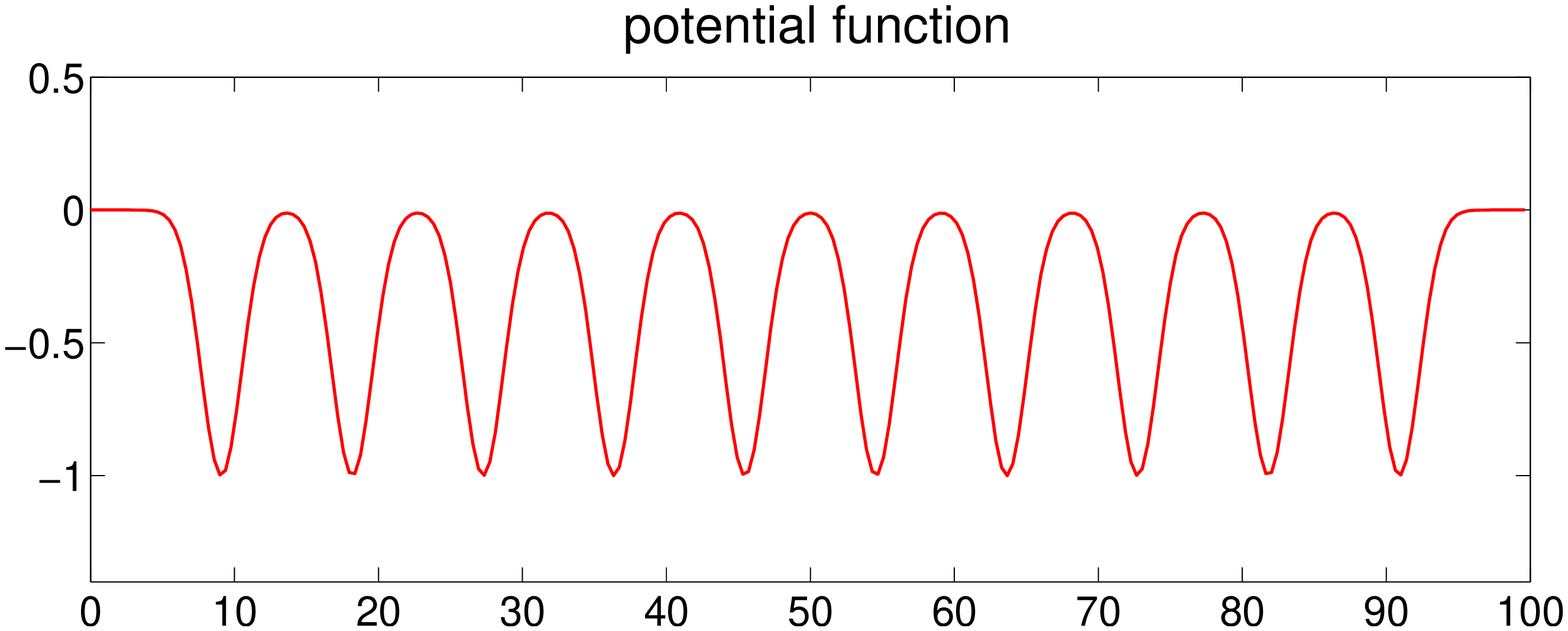}\\
\end{minipage}\hfill
\begin{minipage}{0.39\linewidth}
\includegraphics[width=.9\linewidth]{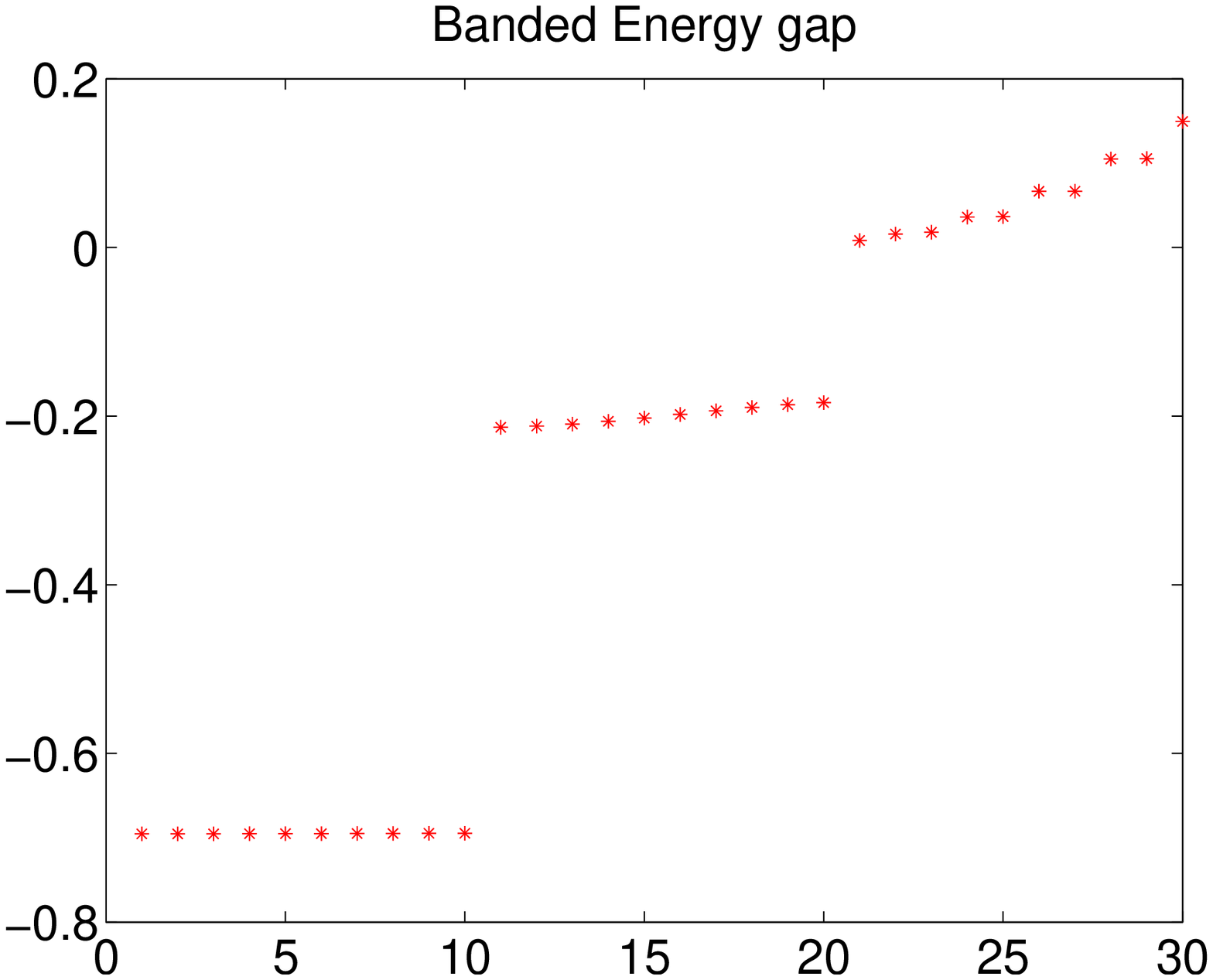}\\
\end{minipage}\hfill\\
\begin{minipage}{0.6\linewidth}
\centering (a)
\end{minipage}\hfill
\begin{minipage}{0.39\linewidth}
\centering (b)
\end{minipage}
\caption{(a): The potential function in the modified Kronig-Penney
  model. (b): The spectrum of the (discretized) Hamiltonian operator.}
 \label{fig:V_KP}
\end{figure}

In our experiments, we consider the proposed models defined on 1D domain $\Omega =
[0,~ 100]$ with periodic boundary condition. In this case, the matrix $H$ is a discretization of the
Schr\"{o}dinger operator $\frac{1}{2}\Delta + V$ defined on $\Omega$. Here, $\Delta$ is approximated by a central difference on $[0, ~100]$ with equally spaced $400$ points. In addition, we consider a modified Kronig--Penney (KP)
model~\cite{Kronig:1931quantum} for a one-dimensional insulator.  The
original KP model describes the states of independent electrons in a
one-dimensional crystal, where the potential function $V(x)$ consists
of a periodic array of rectangular potential wells. We replace the
rectangular wells with inverted Gaussians so that the potential is
given by 
\begin{equation}
\label{eqn:V_KP}
  V(x) = -V_0\sum_{j=1}^{N_{\text{at}}} \exp\left[ -\frac{(x -
      x_j)^2}{\delta^2} \right], 
\end{equation}
where $N_{\text{at}}$ gives the number of potential wells. In our
numerical experiments, we choose $N_{\text{at}} = 10$ and $x_j = 100 j
/ 11$ for $j = 1, \ldots, N_{\text{at}}$. The potential is plotted in
Figure~\ref{fig:V_KP}(a). For this given potential, the Hamiltonian
operator $H = - \tfrac{1}{2} \Delta + V(x)$ exhibits two low-energy
bands separated by finite gaps from the rest of the eigenvalue
spectrum (See Figure~\ref{fig:V_KP}(b)). 

The first experiment demonstrates the proposed methods at zero
temperature for an insulating case, where we set parameters as $\eta =
100, N = 10, m_{et} = 50$. Figure~\ref{fig:0TConvexSparseComp}(a)
illustrates the true density matrix $\sum_{1\leq i \leq 10} \phi_i
\phi_i^T$ obtained by the first $10$ eigenfunctions $\{ \phi_i
\}_{i=1}^{10}$ of $H$.  Figure~\ref{fig:0TConvexSparseComp}(b) plots
the density matrix obtained from the proposed model using algorithm
\ref{alg:LDM_0T}. As one can observe from
Figure~\ref{fig:0TConvexSparseComp}(b), the LDM provides a quite good
approximation to the true density matrix. Note that the LDM with a
larger $\eta$ imposes a smaller penalty on the sparsity, and hence the
solution has a wider spread and closer to the true density matrix. The
approximation behavior is stated in theorem
\ref{thm:LDM_0T_consist}. We also refer our previous
work~\cite{LaiLuOsher:15} for more numerical discussion about this
point. Figure~\ref{fig:0TConvexSparseComp}(c, d) illustrate computing
results of LDM using algorithm \ref{alg:LDM_0T_linear} with band size
$w = 10, 20$ respectively. As we can see from the results, a moderate
size of band width can provide satisfactory approximation for the
LDM. In addition, we also conduct quantitative comparisons between
algorithm \ref{alg:LDM_0T} and algorithm \ref{alg:LDM_0T_linear} in
table \ref{tab:LDM_Accuracy}, where $P$ and $P_w$ are results obtained
from algorithm \ref{alg:LDM_0T} and algorithm \ref{alg:LDM_0T_linear}
respectively. The relative energy error $\frac{|\tr(HP) -
  \tr(HP_w)|}{|\tr(HP)|}$ and the comparable relative differences
between $\frac{\|P - P_w\|_F}{\|P\|_F}$ and $ \frac{\| \T_w(P) -
  P_w\|_F}{\| P\|_F}$ indicate that results obtained by linear scaling
algorithm \ref{alg:LDM_0T_linear} can provide good estimation of the
LDM created from algorithm \ref{alg:LDM_0T}.

\begin{figure}[h]
\begin{center}
\begin{minipage}{0.49\linewidth}
\includegraphics[width=1\linewidth]{0T_DM_n400_N10.eps}\\
\centering (a)
\end{minipage}\hfill
\begin{minipage}{0.49\linewidth}
\includegraphics[width=1\linewidth]{0T_LDM_mu100_n400_N10.eps}\\
\centering (b)
\end{minipage}\hfill\\
\begin{minipage}{0.49\linewidth}
\includegraphics[width=1\linewidth]{0T_BLDM_n400_N10_B10_C50.eps}\\
\centering (c)
\end{minipage}\hfill
\begin{minipage}{0.49\linewidth}
\includegraphics[width=1\linewidth]{0T_BLDM_n400_N10_B20_C50.eps}\\
\centering (d)
\end{minipage}\hfill\\
\end{center}
\caption{Result comparisons of the zero temperature case with KP potential ($n = 400, N = 10$). (a). The true density function obtained by the first N eigenfunctions of $H$. (b). Solution of the localized density matrix with $\eta  = 100$ using algorithm \ref{alg:LDM_0T}. (c), (d). Solutions of the localized density matrices ($\eta = 100$) using algorithm \ref{alg:LDM_0T_linear} with $w = 10, 20$ respectively. }
\label{fig:0TConvexSparseComp}
\end{figure}

For the finite temperature case, it is known that the true density matrix decays exponentially fast along the off-diagonal direction. In the second experiment, we test the algorithms for the finite temperature model with potential free case and modified KP case. We set parameters $\eta = 100, \beta = 1, m_{et} = 40, m_{fd} = 20$ for both cases.
Figure~\ref{fig:FTConvexSparseComp_KP}(a) and Figure~\ref{fig:FTConvexSparseComp_FreeV}(a) illustrate the corresponding true density
matrices described by $\sum_{i} \rho_i \phi_i  \phi_i^T$ using \eqref{eq:Pbetainf}, whose non-zero entries concentrate on a narrow band along the diagonal direction.  Figure~\ref{fig:FTConvexSparseComp_KP}(b) and Figure~\ref{fig:FTConvexSparseComp_FreeV}(b) plot the density matrix obtained from the proposed model using algorithm \ref{alg:LDM_FT}. It is clear that the LDM performs reasonably good approximation to the true density matrix. Figure~\ref{fig:FTConvexSparseComp_KP}(c, d) and Figure~\ref{fig:FTConvexSparseComp_FreeV}(c, d) illustrate computing results of LDM using algorithm \ref{alg:LDM_FT_Linear} with band size $w = 10, 20$ respectively. As we can see from the results, a moderate size of band width can provide satisfactory approximation for the LDM as the true density matrix has exponential decay property. 
In addition, we also conduct quantitative comparisons between algorithm \ref{alg:LDM_FT} and algorithm \ref{alg:LDM_FT_Linear} in table \ref{tab:LDM_Accuracy}. The relative energy error $\frac{|\tr(HP) - \tr(HP_w)|}{|\tr(HP)|}$ and the comparable relative differences between $\frac{\|P - P_w\|_F}{\|P\|_F}$ and $ \frac{\| \T_w(P) - P_w\|_F}{\| P\|_F}$ indicate that results obtained by linear scaling algorithm \ref{alg:LDM_FT_Linear} can provide satifactory estimation of the LDM created from algorithm \ref{alg:LDM_FT}.

\begin{figure}[h]
\begin{center}
\begin{minipage}{0.49\linewidth}
\includegraphics[width=1\linewidth]{FT_DM_n400_N10_beta1.eps}\\
\centering (a)
\end{minipage}\hfill
\begin{minipage}{0.49\linewidth}
\includegraphics[width=1\linewidth]{FT_LDM_n400_N10_beta1_mu100.eps}\\
\centering (b)
\end{minipage}\hfill\\
\begin{minipage}{0.49\linewidth}
\includegraphics[width=1\linewidth]{FT_BLDM_n400_N10_beta1_mu100_B10_Ceigthre40_chebfemi20.eps}\\
\centering (c)
\end{minipage}\hfill
\begin{minipage}{0.49\linewidth}
\includegraphics[width=1\linewidth]{FT_BLDM_n400_N10_beta1_mu100_B20_Ceigthre40_chebfemi20.eps}\\
\centering (d)
\end{minipage}\hfill\\
\end{center}
\caption{Result comparisons of the finite temperature case with KP potential ($n = 400, N = 10, \beta = 1$). (a). The true density function obtained by the first N eigenfunctions of $H$. (b). Solution of the localized density matrix with $\eta  = 100$ using algorithm \ref{alg:LDM_FT}. (c), (d). Solutions of the localized density matrices ($\eta = 100$) using algorithm \ref{alg:LDM_FT_Linear} with $w = 10, 20$ respectively. }
\label{fig:FTConvexSparseComp_KP}
\end{figure}

\begin{figure}[h]
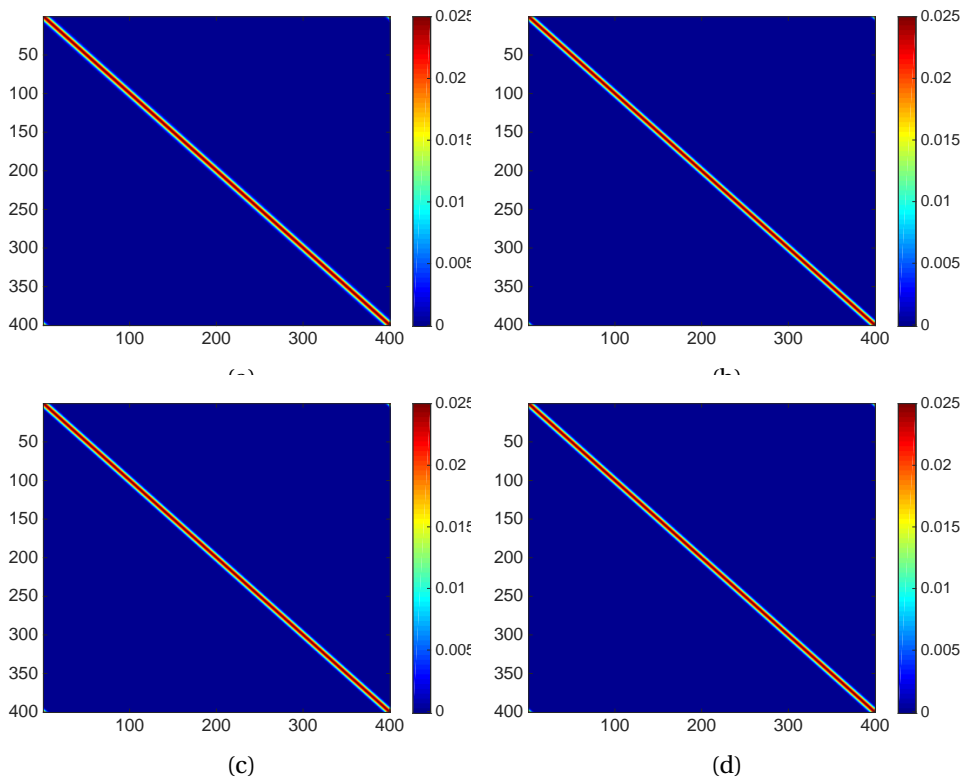

\begin{center}
\begin{minipage}{0.49\linewidth}
\includegraphics[width=1\linewidth]{FT_DM_n400_N10_beta1_freeV.eps}\\
\centering (a)
\end{minipage}\hfill
\begin{minipage}{0.49\linewidth}
\includegraphics[width=1\linewidth]{FT_LDM_n400_N10_beta1_mu100_freeV.eps}\\
\centering (b)
\end{minipage}\hfill\\
\begin{minipage}{0.49\linewidth}
\includegraphics[width=1\linewidth]{FT_BLDM_n400_N10_beta1_mu100_B10_Ceigthre40_chebfemi20_freeV.eps}\\
\centering (c)
\end{minipage}\hfill
\begin{minipage}{0.49\linewidth}
\includegraphics[width=1\linewidth]{FT_BLDM_n400_N10_beta1_mu100_B20_Ceigthre40_chebfemi20_freeV.eps}\\
\centering (d)
\end{minipage}\hfill\\
\end{center}
\caption{Result comparisons of the potential free finite temperature case($n = 400, N = 10, \beta = 1$). (a). The true density function obtained by the first N eigenfunctions of $H$. (b). Solution of the localized density matrix with $\eta  = 100$ using algorithm \ref{alg:LDM_FT}. (c), (d). Solutions of the localized density matrices ($\eta = 100$) using algorithm \ref{alg:LDM_FT_Linear} with $w = 10, 20$ respectively. }
\label{fig:FTConvexSparseComp_FreeV}
\end{figure}

\begin{table}[h]
\begin{center}
\begin{tabular}{|c||c||c|c||c|c|}
  \hline
  &  & $ \frac{\abs{\tr(HP) - \tr(HP_w)}}{\abs{\tr(HP)}}$  & $\frac{| \E_{\beta,\eta}(P) - \E_{\beta,\eta}(P_w)|}{|\E_{\beta,\eta}(P)|} $    & $\frac{\| P - \T_w(P) \|_F}{\|P\|_F} $&  $\frac{\| P - P_{w} \|_F}{\|P\|_F} $ \\ 
  \hline
  \multirow{3}{3cm}{zero temperature ($\beta =\infty, \eta = 100$, modified KP)} & $w = 10$ & $1.00\times 10^{-1}$ & $6.36 \times 10^{-2}$ &$1.46\times 10^{-1}$ & $2.32 \times 10^{-1}$ \\
  \cline{2-6}
  & $w = 15$ &  $4.30\times 10^{-2}$ & $5.28 \times 10^{-2}$ &$2.24 \times 10^{-2}$    &  $3.27\times 10^{-2} $\\ 
  \cline{2-6}
  & $w = 20$ & $4.97\times 10^{-3}$  &$1.20 \times 10^{-2}$ &$1.01\times 10^{-3} $  & $1.56 \times 10^{-2}$\\  
  \hline\hline
  \multirow{3}{3cm}{finite temperature ($\beta = 1, \eta = 100$, modified KP)} & w = 10 & $8.31 \times 10^{-3}$ & $1.44 \times 10^{-2}$&$3.86\times 10^{-3}$ & $8.46 \times 10^{-3}$ \\
  \cline{2-6}
  & $w = 15$ & $5.87 \times 10^{-4}$ &  $5.78 \times 10^{-4}$ &$2.51\times 10^{-4}$  &  $5.38\times 10^{-4} $ \\
  \cline{2-6}
  & $w = 20$ & $9.40 \times 10^{-4}$ & $4.12 \times 10^{-4}$ &$1.36\times 10^{-4} $  & $3.05 \times 10^{-4}$ \\ 
  \hline\hline
  \multirow{3}{3cm}{finite temperature ($\beta = 1,  \eta = 100$, potential free)} & w = 10 & $2.13 \times 10^{-3}$ & $2.78 \times 10^{-2}$ &$4.04\times 10^{-3}$ & $5.99 \times 10^{-3}$ \\
  \cline{2-6}
  & $w = 15$ & $3.03 \times 10^{-6}$ &  $1.03 \times 10^{-3}$ & $2.66\times 10^{-4}$    &  $4.81 \times 10^{-4} $ \\
  \cline{2-6}
  & $w = 20$ & $1.42 \times 10^{-5}$ & $6.82 \times 10^{-4}$ &$1.54\times 10^{-4} $  & $2.35 \times 10^{-4}$ \\ 
  \hline
\end{tabular}
\end{center}
\caption{Approximation error using banded matrices with $\eta = 100$ for all cases. }
\label{tab:LDM_Accuracy}
\end{table}


\begin{figure}[h]
\begin{center}
\begin{minipage}{0.5\linewidth}
\includegraphics[width=1\linewidth]{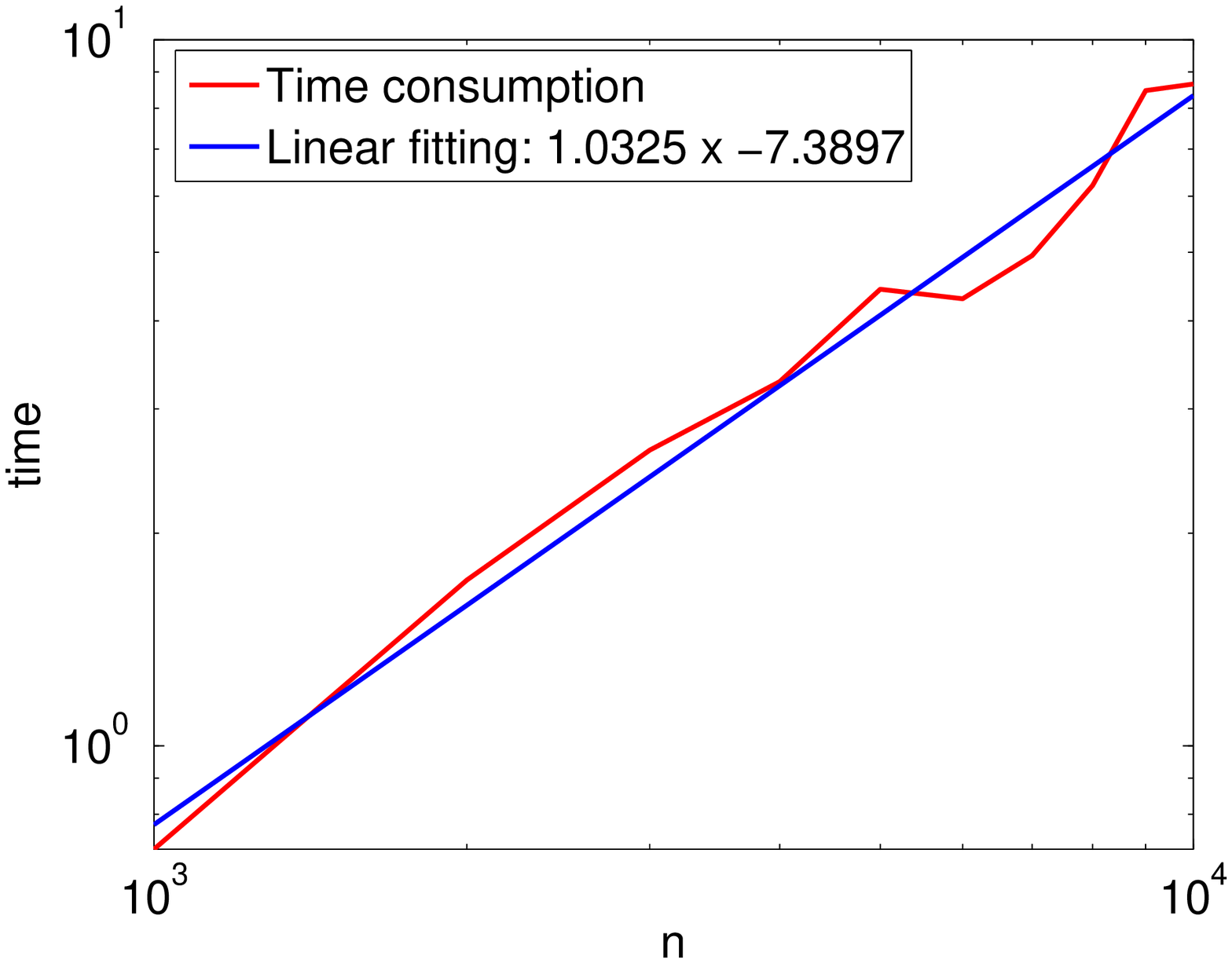}
\centering  (a)
\end{minipage}\hfill
\begin{minipage}{0.5\linewidth}
\includegraphics[width=1\linewidth]{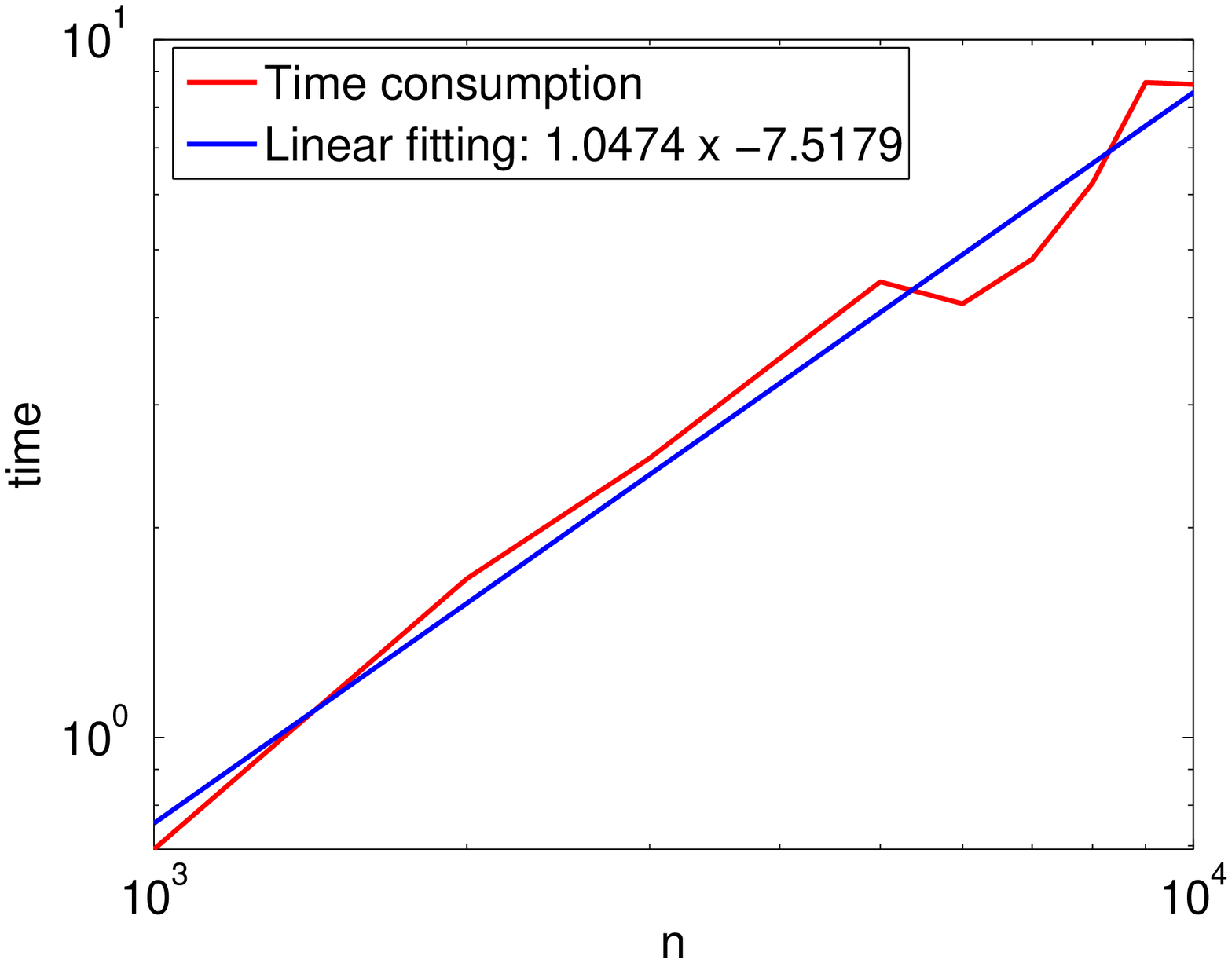}
\centering  (b)
\end{minipage}\hfill
\end{center}
\caption{(a): Time consumption of $\textrm{\sf EigenThresviaChebyPoly}(P,40)$ with different sizes of matrix $P$ and fixed band width $w = 10$. (b): Time consumption of $\textrm{\sf FermiDiracviaChebyPoly}(P,40)$ with different sizes of matrix $P$ and fixed band width $w = 10$.}
\label{fig:ChebApprox_LinearScaling}
\end{figure}

In the third experiment, we test the computation cost of the proposed algorithms for approximating the eigenvalue thresholding and the Fermi-Dirac operation. Using different matrix sizes, the $\textrm{\sf EigenThresviaChebyPoly}$ designed by algorithm \ref{alg:EigenHardThresholding} is applied to approximate the step of hard thresholding eigenvalues used in the algorithm \ref{alg:LDM_0T} step 5. Similarly, we also test $\textrm{\sf FermiDiracviaChebyPoly}$ for different matrix sizes for the Fermi-Dirac operation. Red curves in Figure \ref{fig:ChebApprox_LinearScaling} (a, b) report log-log curves of the average computation cost for both cases with respect to different matrix sizes, where blue curves illustrate the corresponding linear fitting for the computation cost curve. For banded matrices, It is clear that the computation costs for both approximation algorithms based on Chebyshev polynomials are linear scaling to the matrix sizes. 

We next report comparisons for the computation cost of all proposed algorithms, where we set parameters the same as those used in the first and the second experiments. We first conduct comparison between algorithm \ref{alg:LDM_0T} and algorithm \ref{alg:LDM_0T_linear} for the zero temperature case with matrix size chosen from $10^3$ to $10^4$. Figure \ref{fig:LinearScaling} (a) reports the log-log plots of average time consumption of each iteration for both algorithms. From the slope of the corresponding linear fitting curves, it is clear to see that the original proposed algorithm \ref{alg:LDM_0T} has computation cost cubically dependent on the matrix size, while the algorithm \ref{alg:LDM_0T_linear} based on the banded structure has computation cost linearly dependent on the matrix size. As we can observed from Figure \ref{fig:LinearScaling} (a), the linear scaling algorithm can significantly reduce the computation time when $n$ is larger than certain moderate size. Similarly, we also conduct comparison between 
algorithm \ref{alg:LDM_FT} and algorithm \ref{alg:LDM_FT_Linear} for the finite temperature case, where matrices size has been chosen from $10^2$ to $2.5\times 10^3$. The slopes of the corresponding linear fitting curves also indicate that the proposed algorithm \ref{alg:LDM_FT_Linear} has linear scaling dependent on the problem size, while the original algorithm \ref{alg:LDM_FT} is cubically scaling to the problem size. Therefore, we can take the linear scaling advantage of the proposed algorithm \ref{alg:LDM_0T_linear} and algorithm \ref{alg:LDM_FT_Linear} based on the banded structure for problems with large size.

\begin{figure}[h]
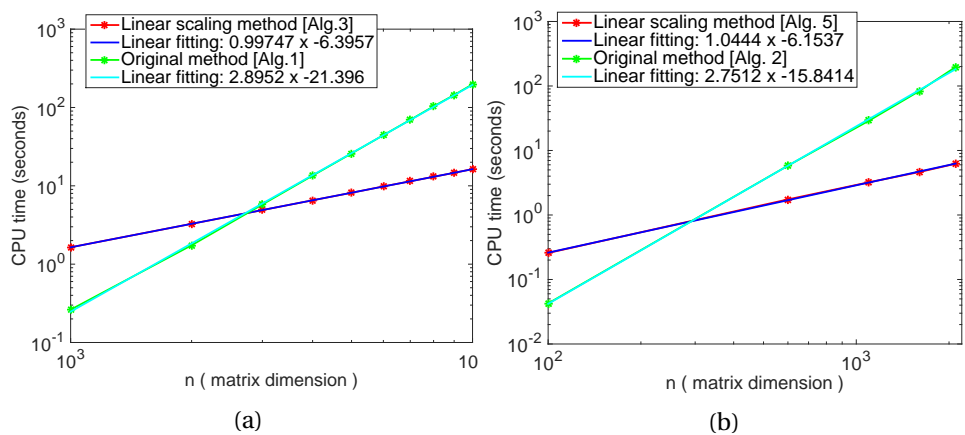

\begin{center}
\begin{minipage}{0.5\linewidth}
\includegraphics[width=1\linewidth]{TimeScaling_0T.eps}\\ 
\centering (a)
\end{minipage}\hfill
\begin{minipage}{0.5\linewidth}
\includegraphics[width=1\linewidth]{TimeScaling_FT.eps}\\ 
\centering (b)
\end{minipage}\hfill
\end{center}
\caption{(a): Zero temperature case: time consumption of each iteration used in Alg. \ref{alg:LDM_0T} v.s. Alg. \ref{alg:LDM_0T_linear}. (b): Finite temperature case: time consumption of each iteration used in Alg. \ref{alg:LDM_FT} v.s. Alg. \ref{alg:LDM_FT_Linear}}
\label{fig:LinearScaling}
\end{figure}


\section{Conclusion and future works}
\label{sec:conclusion}
This work extends our previous work \cite{LaiLuOsher:15} for constructing LDMs to the finite temperature case by adding an $\ell_1$ regularization to the energy of the original quantum system. As it has been shown that the density matrix decays exponential away from the diagonal for insulating system or system at finite temperature, the LDMs obtained by the proposed convex variational models provide good approximation to the original density matrices. Theoretically, we have conducted analysis to the approximation behavior.  In addition, we also design convergence guarantteed numerical algorithms to solve the proposed localized density matrix models based on Bregman iteration. More importantly, by observing that the $\ell_1$ regularized system can naturally create a localized density matrix with banded structure, we design approximate algorithms to find the localized density matrices. These numerical algorithms for banded matrices have computation complexity linear scaling to the matrix size $n$. 

This paper mainly focuses on proposing variational methods for LDMs,
theoretically analyzing its properties and designing numerical
algorithms. Thus, we only test some simple examples in the
experimental part to illustrate the proposed modes and algorithms. In
particular, we have taken a naive finite difference
discretization. All codes are implemented in \textsf{MATLAB} for test
purpose and are not optimized. In our future work, we will further
accelerate the computation from several aspects. First, we can
discretize the system using better localized basis to reduce the
problem size \cites{FHI-aims, LinLuYingE:DG, Siesta}. Second, we can
improve the implementation for better computation performance. In
addition, we will explore theoretical analysis for decay properties of
the LDM in future works.

\bibliographystyle{amsxport}
\bibliography{LocalizeMatrices}

\end{document}